\theoremstyle{plain}
\newtheorem{theorem}{Theorem}[section]
\newtheorem{maintheorem}{Theorem}
\newtheorem{proposition}[theorem]{Proposition}
\newtheorem{lemma}[theorem]{Lemma}
\newtheorem{corollary}[theorem]{Corollary}
\theoremstyle{definition}
\newtheorem{definition}[theorem]{Definition}
\newtheorem{question}[theorem]{Question}
\newcommand{\nc}{\newcommand}
\nc{\dmo}{\DeclareMathOperator}
\newcommand{\R}{\mathbb{R}}
\newcommand{\z}{\mathbb{Z}}
\newcommand{\Ho}{\mathcal{HO}}
\newcommand{\h}{\mathbb{H}^2}
\nc{\Q}{\mathbb{Q}}
\nc{\F}{\mathbb{F}}
\nc{\Z}{\mathbb{Z}}
\nc{\C}{\mathbb{C}}
\nc{\N}{\mathbb{N}}
\nc{\Ell}{\mathcal{L}}
\nc{\M}{\mathcal{M}}
\nc{\K}{\mathcal{K}}
\nc{\I}{\mathcal{I}}
\nc{\T}{\mathcal T}
\nc{\U}{\mathcal U}
\nc{\disk}{\mathbb{D}}
\nc{\hyp}{\mathbb{H}}
\nc{\CP}{\mathbb{CP}}
\nc{\RP}{\mathbb{RP}}
\dmo{\Mod}{Mod}
\dmo{\PMod}{PMod}
\dmo{\LMod}{LMod}
\dmo{\Diff}{Diff}
\dmo{\Homeo}{Homeo}
\dmo{\dist}{dist}
\dmo\BDiff{BDiff}
\dmo\SO{SO}
\dmo\Hom{Hom}
\dmo\SL{SL}
\dmo\rank{rank}
\dmo\sig{sig}
\dmo\Out{Out}
\dmo\Aut{Aut}
\dmo\Inn{Inn}
\dmo\GL{GL}
\dmo\PGL{PGL}
\dmo\Gr{Gr}
\dmo\PSL{PSL}
\dmo\BHomeo{BHomeo}
\dmo\EHomeo{EHomeo}
\dmo\EDiff{EDiff}
\dmo\Disc{Disc}
\dmo\Aff{Aff}
\renewcommand{\hat}{\widehat}
\dmo\Teich{Teich}
\dmo\Fix{Fix}
\nc{\pair}[1]{\ensuremath{\left\langle #1 \right\rangle}}
\nc{\abs}[1]{\ensuremath{\left| #1 \right|}}
\nc{\action}{\circlearrowright}
\nc{\norm}[1]{\left | \left | #1 \right | \right |}
\nc{\abcd}[4]{\ensuremath{\left(\begin{array}{cc} #1 & #2 \\ #3 & #4 \end{array}\right)}}
\dmo{\Isom}{Isom}
\nc{\normal}{\vartriangleleft}
\dmo{\Vol}{Vol}
\dmo{\im}{Im}
\dmo{\Push}{Push}
\dmo{\Conf}{Conf}
\dmo{\PConf}{PConf}
\dmo{\PB}{PB}
\dmo{\id}{id}
\dmo{\Jac}{Jac}
\dmo{\Pic}{Pic}
\dmo{\Stab}{Stab}
\dmo{\Arf}{Arf}
\dmo{\End}{End}
\dmo{\Gal}{Gal}
\dmo{\lcm}{lcm}
\dmo{\ab}{ab}
\dmo{\opp}{op}
\dmo{\SU}{SU}
\dmo{\OT}{\Omega \mathcal{T}}
\dmo{\OM}{\Omega \mathcal{M}}
\dmo{\PH}{\mathbb{P}\mathcal{H}}
\dmo{\spin}{spin}
\dmo{\even}{even}
\dmo{\odd}{odd}
\dmo{\comp}{\mathcal{H}}
\dmo{\Mgk}{\mathcal{M}_{g, \underline{\kappa}}}
\dmo{\orb}{orb}
\dmo{\AJ}{AJ}
\dmo{\Ck}{\mathsf{C}(\underline{\kappa})}
\dmo{\Int}{Int}
\dmo{\pr}{pr}
\dmo{\lab}{lab}
\dmo{\Sym}{Sym}
\dmo{\Ann}{Ann}
\dmo{\Rad}{Rad}
\dmo{\Ind}{Ind}
\dmo{\Div}{Div}
\dmo{\Res}{Res}
\dmo{\Hur}{Hur}
\dmo{\vcd}{vcd}
\nc{\Span}[1]{\operatorname{Span}(#1)}
\renewcommand{\epsilon}{\varepsilon}
\renewcommand{\tilde}{\widetilde}
\nc{\coloneq}{\mathrel{\mathop:}\mkern-1.2mu=}
\nc{\margin}[1]{\marginpar{\scriptsize #1}}
\nc{\para}[1]{\medskip\noindent\textbf{#1.}}
\definecolor{myblue}{RGB}{102,153, 255}
\definecolor{myred}{RGB}{204,0,0}
\definecolor{mygreen}{RGB}{0,204,0}
\definecolor{myorange}{RGB}{255,102,0}
\definecolor{mypurple}{RGB}{138,43,226}
\nc{\red}[1]{\textcolor{myred}{#1}}
\nc{\blue}[1]{\textcolor{myblue}{#1}}
\title[Rigidity on horocycles and hypercycles of $\h$]{Rigidity on horocycles and hypercycles of $\h$}
\author{Cheikh Lo, Abdoul Karim Sane}
\date{May 26, 2024}
\begin{document}

\maketitle
\begin{abstract} We show that a bijection $f:\h\rightarrow\h$ of the hyperbolic plane that sends horocycles to horocycles (respectively hypercycles to hypercycles) is an isometry. This extends a previous result of J. Jeffers on geodesics to all curves with constant curvature in $\h$. We go beyond by showing that every abstract automorphism of the geodesic  graph (respectively horocycles and hypercycles graphs) is induced by an earthquake map (respectively an isometry) of $\h$. This shadowed the difference between the geometry of geodesics and that of horocycles/hypercycles. 
\end{abstract}

\section{Introduction}
In this article, we prove two kinds of results. The first one is inspired by works of Jeffers \cite{Jef} where he showed, among many other similar results, that geodesic-preserving bijections of the hyperbolic plane $\h$ are isometries.  The second one is about the automorphisms of the graph associated to objects on a given geometry: an Ivanov-like theorem.\vspace{0.2cm} 
\begin{paragraph}{\textbf{Jeffers-type results:}} In \textit{Lost theorem of geometry \cite{Jef}}, J. Jeffers showed that geodesic-preserving bijections of $\h$ (respectively $\mathbb{R}^2$) are isometries (respectively affine map). This raises the question of finding the geometries that have this kind of results. Let $G$ be a group and $X$ be a set with a $G$-action; we say that $X$ is a $G$-geometry in the sense of Thurston. A set $\mathcal{O}$ of object in $X$ stable under the action of $G$ is a set of \textit{$G$-geometric object}. For instance when $X=\h$ and $G$ is the group of M\"{o}bius transformation, one can choose $\mathcal{O}$ to be the set of all geodesics. 
\begin{question}\label{qst} For which pairs $[(X,G),\mathcal{O}]$, where $X$ is a $G$-geometry and $\mathcal{O}$ is a set of geometric objects on $X$, we have a Jeffers-like result? In other words, every bijection $f:X\rightarrow X$ preserving elements of $\mathcal{O}$ is an element of $G$.
\end{question}
 
Aside from  well-known geometric objects on $\h$ like geodesics and hyperbolic circles, we also have \textit{horocycles} (respectively \textit{hypercycles}) which correspond to orbit flows of parabolic isometries (respectively hyperbolic isometries); see \cite{Bea}.

In this article we first extend Jeffers result to the family of horocycles. Our theorem is:
\begin{maintheorem}\label{main} Let $f:\h\rightarrow\h$ be a bijection that sends horocycles to horocycles. Then, $f$ is an isometry. 
\end{maintheorem}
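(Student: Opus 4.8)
The plan is to reconstruct the geodesics of $\h$ from the incidence pattern of its horocycles and then to invoke Jeffers' theorem on geodesic-preserving bijections \cite{Jef}. The one elementary input about $f$ used throughout is that, being a bijection of $\h$, it satisfies $f(H\cap H')=f(H)\cap f(H')$ for all horocycles $H,H'$; hence $|H\cap H'|=|f(H)\cap f(H')|$, so $f$ preserves the trichotomy \emph{disjoint\,/\,tangent\,/\,secant} among horocycles, and it sends the point of tangency of a tangent pair to the point of tangency of the image pair. Note that only this much about $f$ is needed --- in particular one never needs $f^{-1}$ to send horocycles to horocycles.

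The first real step is to recover \emph{concentricity} of horocycles (having the same base point on $\partial\h$). Call a family of pairwise disjoint horocycles whose union is $\h$ a \emph{pencil}. For each $\xi\in\partial\h$ the family $\mathcal F_\xi$ of all horocycles based at $\xi$ is a pencil, since every point of $\h$ lies on exactly one horocycle based at $\xi$; the geometric heart of the proof is the converse, that \emph{every pencil equals $\mathcal F_\xi$ for a unique $\xi\in\partial\h$}. Granting this, $f(\mathcal F_\xi)$ is a family of pairwise disjoint horocycles with union $f(\h)=\h$, hence a pencil, hence equals $\mathcal F_{\sigma(\xi)}$ for some bijection $\sigma\colon\partial\h\to\partial\h$; and $f$ restricts to a bijection from $\mathcal F_\xi$ onto $\mathcal F_{\sigma(\xi)}$.

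Geodesics now enter as follows. For distinct $\xi,\eta\in\partial\h$, a horocycle based at $\xi$ and a horocycle based at $\eta$ are tangent at a point $p$ precisely when $p$ lies on the geodesic $\overline{\xi\eta}$ (indeed their common tangent line at such a $p$ is perpendicular to $\overline{\xi\eta}$); and as the two horocycles range over $\mathcal F_\xi$ and $\mathcal F_\eta$, the set of these tangency points is exactly $\overline{\xi\eta}$. Since $f$ preserves tangency and tangency points and maps $\mathcal F_\xi\times\mathcal F_\eta$ onto $\mathcal F_{\sigma(\xi)}\times\mathcal F_{\sigma(\eta)}$ with $\sigma(\xi)\neq\sigma(\eta)$, it follows that $f(\overline{\xi\eta})=\overline{\sigma(\xi)\,\sigma(\eta)}$. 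Thus $f$ carries every geodesic onto a geodesic, and being a bijection of $\h$ it is an isometry by Jeffers' theorem \cite{Jef}.

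The main obstacle is the pencil lemma. The approach I would take: in a pencil $\mathcal P$ distinct leaves are disjoint horocycles, so their horoballs are pairwise nested or disjoint --- a laminar family covering $\h$. The crucial observation is that \emph{the closure of a horoball in $\h\cup\partial\h$ meets $\partial\h$ only at its base point}; consequently no horocycle based at $\eta$ can be contained in a horoball based at $\xi\neq\eta$. Fixing a leaf $H_0$ based at $\xi_0$, this forces every leaf whose horoball is comparable with that of $H_0$ to be based at $\xi_0$ as well, and shows that the union $\mathcal U$ of the horoballs of all $\xi_0$-leaves of $\mathcal P$ is a horoball based at $\xi_0$ (or all of $\h$). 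If $\mathcal U\neq\h$, normalize so that $\xi_0=\infty$ in the upper half-plane; then $\h\setminus\overline{\mathcal U}=\{0<y<h\}$ is covered by leaves not based at $\xi_0$, each of which is a Euclidean circle tangent to $\mathbb R$, and a horocycle $C=\{y=\gamma\}$ with $0<\gamma<h$ is covered by those leaves that meet it. But a leaf meeting $C$ has Euclidean radius $\geq\gamma/2$, and pairwise disjoint circles tangent to $\mathbb R$ of radius $\geq\gamma/2$ have feet more than $\gamma$ apart on $\mathbb R$; hence only countably many leaves meet $C$, each in at most two points, so $C$ would be a countable set --- absurd. Therefore $\mathcal U=\h$, and then the minimality of each leaf's horoball among the $\mathcal P$-horoballs through a given point, together with the closure observation above, forces every leaf to be based at $\xi_0$, i.e.\ $\mathcal P=\mathcal F_{\xi_0}$.
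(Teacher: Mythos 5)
Your argument is correct, and it takes a genuinely different route from the paper's. The paper proves \Cref{main} from scratch: it first extends $f$ to a bijection of $\partial\h$ via the inclusion order on concentric horocycles (\Cref{projet}), then postcomposes with isometries so that $f$ fixes $0$, $i$, $\infty$, hence the integers, hence the dyadics, hence all of $\partial\h$ and the horocycle $h_i(\infty)$ pointwise (\Cref{2}, \Cref{ctr}), and finally pins down every remaining point as an intersection of fixed horocycles. You instead reduce to Jeffers' theorem by reconstructing geodesics combinatorially: concentric families are recovered as the pencils (pairwise-disjoint families of horocycles covering $\h$), which your counting argument on the horocycle $\{y=\gamma\}$ shows are exactly the families $\mathcal{F}_\xi$, and the geodesic $\overline{\xi\eta}$ is then recovered as the locus of tangency points of pairs in $\mathcal{F}_\xi\times\mathcal{F}_\eta$. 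This is in fact the strategy the paper itself uses for \Cref{main2} (show the map preserves geodesics, then quote Jeffers), so your proof unifies the two theorems; it is shorter and avoids the density and pinching arguments, at the cost of importing Jeffers' result, whereas the paper's proof is self-contained and its normalization machinery is reused in the proof of \Cref{ivan}. Two points are worth writing out explicitly. First, the injectivity of the induced boundary map $\sigma$, which you use when asserting $\sigma(\xi)\neq\sigma(\eta)$: if $\sigma(\xi)=\sigma(\eta)$ for $\xi\neq\eta$, choose intersecting horocycles based at $\xi$ and at $\eta$; their images would be concentric, hence disjoint, contradicting $f(H\cap H')=f(H)\cap f(H')\neq\emptyset$. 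Second, the last step of the pencil lemma deserves a sentence: once $\mathcal{U}=\h$, a leaf $H_1$ based at $\eta\neq\xi_0$ would meet the horoball $B$ of some $\xi_0$-leaf, hence lie entirely inside $B$ by disjointness of the leaves, forcing $\eta\in\overline{B}\cap\partial\h=\{\xi_0\}$, a contradiction.
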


We also prove a similar result for hypercycles. We recall that by definition, points on a hypercycle are at the same distance to a given geodesic and this property characterizes hypercycles. 
\begin{maintheorem}\label{main2} Let $f:\h\rightarrow\h$ be a bijection that sends hypercycles to hypercycles. Then, $f$ is an isometry.
\end{maintheorem}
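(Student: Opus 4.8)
The plan is to reduce \Cref{main2} to Jeffers' ``lost theorem''. Concretely, I want to show that a hypercycle-preserving bijection $f$ must send geodesics to geodesics; once that is known, \cite{Jef} immediately yields $f\in\Isom(\h)$. So the real task is to \emph{recover the geodesics of $\h$ from the incidence data ``point $x$ lies on hypercycle $H$'' alone}, and to do so in a way that is manifestly transported by $f$.

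The reconstruction I would use is via coaxial families. Fix a hypercycle $H_0$ with ideal endpoints $a,b$. The hypercycles with these same two endpoints --- all distances, both sides of the axis --- are pairwise disjoint in $\h$ and, together with the axis $\gamma_0$ of $H_0$, foliate $\h$; hence $\gamma_0=\h\setminus\bigcup\{H:\ H\text{ coaxial with }H_0\}$, and every geodesic of $\h$ appears as some $\gamma_0$. It is therefore enough to express the relation ``$H$ is coaxial with $H_0$'' using only point--hypercycle incidences. I would do this in two moves. First, the easy direction: if $H$ and $H'$ are coaxial and lie on the same side of their common axis, then they are disjoint and the closed strip they bound is foliated by hypercycles, each separating the other two. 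Then the converse: among disjoint pairs of hypercycles, ``bounding such a hypercyclic strip'' forces coaxiality --- here the content is that the elliptic pencil of circles through a fixed pair of boundary points is the only pencil in the model producing a nested one-parameter family of pairwise disjoint hypercycles. With same-side coaxiality pinned down combinatorially, a short bootstrap recovers opposite-side coaxiality, hence the full coaxial relation, hence each $\gamma_0$. As a preliminary one also needs $f^{-1}$ to send hypercycles to hypercycles, so that $f$ induces a bijection on the set of hypercycles compatible with incidence in both directions; this rests on the fact that three distinct points of $\h$ lie on at most one hypercycle, so that a hypercycle is determined by three of its points. Granting the reconstruction, $f$ restricts to a geodesic-preserving bijection of $\h$, and \cite{Jef} concludes.

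The heart of the difficulty is exactly this incidence-theoretic characterization of geodesics among all curves of constant curvature: geodesics, horocycles, and hyperbolic circles are all \emph{not} hypercycles, so a geodesic must be singled out purely through incidences of genuine hypercycles, and one must rule out that a parabolic pencil (horocycles at one ideal point) or an elliptic pencil of hyperbolic circles could reproduce the ``nested disjoint hypercycles bounding a strip'' pattern that marks a geodesic's coaxial family. A second, genuine obstacle is that $f$ carries no continuity hypothesis, so one cannot simply realize a geodesic (or a horocycle) as a Hausdorff limit of hypercycles and push it forward; every such degeneration has to be encoded combinatorially --- or, alternatively, one first extracts continuity of $f$ from the incidence data in the style of \cite{Jef} and only afterwards passes to limits to reach geodesics.
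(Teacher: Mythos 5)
Your overall strategy coincides with the paper's: show that a hypercycle-preserving bijection carries geodesics to geodesics by characterizing coaxiality (``same ideal endpoints'') purely through incidences of hypercycles, recover each geodesic as the common core of its coaxial family, and invoke Jeffers' theorem. Your last step is also the paper's: if some $f(x)$ with $x\in g$ missed the geodesic $g'$ spanned by the common endpoints of $f(h_1)$ and $f(h_2)$, it would lie on a hypercycle coaxial with them, and pulling back would produce a hypercycle coaxial with $g$ yet meeting $g$, a contradiction.

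The gap is in your combinatorial criterion for coaxiality: ``two disjoint hypercycles bound a strip foliated by pairwise disjoint hypercycles, each separating the other two'' does \emph{not} force them to have the same endpoints. Concretely, in the upper half-plane let $C_r$ be the Euclidean circle of radius $r$ centered at $i$. For $r\in[2,3]$ each arc $C_r\cap\h$ is a hypercycle (it meets $\R$ at $\pm\sqrt{r^2-1}$ non-orthogonally, since the center is off the real axis); these arcs are pairwise disjoint and nested, each separates the other two, and they foliate the closed region of $\h$ between $C_2\cap\h$ and $C_3\cap\h$. Yet their endpoints vary with $r$, so no two of them are coaxial. Your parenthetical about pencils shows you were only ruling out the parabolic and elliptic \emph{pencils}, but a one-parameter family of hypercycles seen only through incidence data need not be a pencil, and that is exactly where the criterion breaks. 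The paper sidesteps this by classifying the four intersection types between hypercycles (tangency; one point with a shared endpoint; one transverse point with no shared endpoint; two points), showing each type is preserved by $f$, and then characterizing coaxiality as: $h_1$ and $h_2$ have the same endpoints if and only if every hypercycle meeting $h_1$ in exactly one transverse point with no shared endpoint also meets $h_2$ --- a test that the concentric family above fails, as it should. If you replace your strip criterion by such a transverse-intersection criterion (or otherwise exclude non-coaxial nested families), the remainder of your argument goes through; your observation that one must check $f$ induces a bijection on the set of hypercycles is correct and is needed (and used implicitly) in the paper as well.
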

\end{paragraph} 
Since geodesics (respectively circles, horocycles and hypercycles) have constant curvature $r=0$ (respectively $r\in(1,+\infty)$, $r=1$ and $r\in(0,1)$), our theorem together with J. Jeffers theorems answer \Cref{qst} for all curves with constant curvature in $\mathbb{H}^2$. The main idea that is used to  prove these results is to show that postcomposing a given geodesic-preserving bijection (respectively circle-preserving, horocycle-preserving, hypercycle-preserving) leads to the identity map. One of the major obstacles we overcame in our proof of \Cref{main} and \Cref{main2} is that horocycles and hypercycles are not completly determined by their endpoints; which was central in the geodesic case.    
Another contrast between geodesics/circles and horocycles/hypercycles is that some of the results proved in \cite{Jef} are not true for horocycles. For instance a bijection $f$ on the set $\mathcal{C}$ of all circles in $\h$ that preserves inclusion of circles, induces a bijection of $\h$ that sends circles to circles; thus $f$ is induced by a M\"{o}bius transformation using Theorem 5.1  in \cite{Jef}. It contrasts with the situation where we consider the set $\Ho$ of all horocylces of $\h$ endowed with the partial order induced by inclusion:
\begin{equation}\label{equ} 
h_1\leq h_2\;\;\mbox{if the disk bounded by}\;h_2\;\mbox{contains}\;h_1.
\end{equation}
In particular $h_1\leq h_2$ implies that $h_1$ and $h_2$ have the same center in $\h$; the center of a horocycle $h$ referred to the unique point of $h\cap\partial\h$. 
 Here is a counter-example of a map on the set of all horocycles that preserves inclusion without being an isometry. Let $\Ho(p)$ denotes the set of all horocycles centered at ~$p$. We denote by $h(p,r)$ the horocycle centered at $p$ with Euclidean radius equal $r$ (when $p=\infty$, $h(p,r)$ is a horizontal line and $r$ is the imaginary part of all its points). Now, let $\sigma_{p,q}:\Ho\rightarrow\Ho$ be the map defined by $\sigma_{p,q}(h(p,r))=h(q,r),\sigma_{p,q}(h(q,r))=h(p,r)$ and $\sigma_{p,q}(h(x,r))=h(x,r)$ for any $x\in\R-\{p,q\}$. One can see that $\sigma_{p,q}$ is a bijection that preserves the order while $\sigma_{p,q}$ is not induced by an isometry since its restriction to $\partial\h$ is not continuous. In general, any bijection $\sigma$ of $\partial\h$ induces a such map on ~$\Ho$.  

\begin{paragraph}{\textbf{Ivanov-like results:}}These results fit into many others and follow the idea that geometry can be encoded by a group action. On the other hand, Jeffers result on geodesics together with our results have a stronger version which can be stated as a Ivanov-like theorem (see \cite{Papa} for survey on Ivanov metaconjecture). To do so, let $\mathcal{K}_{geod}$ (respectively $\mathcal{K}_{horo}$ and $\mathcal{K}_{hyper}$) be the graph of geodesics (respectively horocycles and hypercyles) in $\h$ where the vertices correspond to geodesics (respectively horocycles and hypercycles) and the edges correspond to disjointness. We aim to describe the automorphism group of these graphs. Let $t$, $x$, $y$ and $z$ be four different points on $\mathbb{S}^1$ considered as $\partial\h$. The sets $\{t,x\}$ and $\{y,z\}$ are \textit{linked} if $y$ and $z$ are in different components of $\mathbb{S}^1-\{t,x\}$. A map $f:\mathbb{S}^1\rightarrow\mathbb{S}^1$ preserves links if it sends linked sets to linked sets. Since every $f\in\mathrm{Homeo}(\mathbb{S}^1)$ is either orientation-preserving or reversing, it turns out that $f$ is link-preserving. Since a geodesic is completely defined by its endpoints, every element $f\in\mathrm{Homeo}(\mathbb{S}^1)$ induces an automorphism $\hat{f}$ of $\mathcal{K}_{geod}$ defined by $\hat{f}:g:=\{x,y\}\mapsto \{f(x),f(y)\}$ where $g$ is the geodesic defined by $x$ and $y$ in $\mathbb{S}^1$. So, there is a natural injection $i:\mathrm{Homeo}(\mathbb{S}^1)\rightarrow\mathrm{Aut}(\mathcal{K}_{goed})$. Unlike geodesics, horocycles and hypercycles are not determined by their endpoints and this implies different behaviors as shown below: 

\begin{maintheorem}\label{ivan} We consider the graph $\mathcal{K}_{geod}$ (respectively $\mathcal{K}_{horo}$ and $\mathcal{K}_{hyper}$) of geodesics (respectively horocycles and hypercycles). 
\begin{itemize}
\item The natural injection $i:\mathrm{Homeo}(\mathbb{S}^1)\rightarrow\mathrm{Aut}(\mathcal{K}_{geod})$ is an isomorphism. In other words, every automorphisms of $\mathcal{K}_{geod}$ is induced by a link-preserving homeomorphism. 
\item The natural injection $i:\mathrm{Isom}(\h)\rightarrow\mathrm{Aut}(\mathcal{K}_{horo})$ is an isomorphism and the same result also happens for hypercycles. 
\end{itemize}
\end{maintheorem}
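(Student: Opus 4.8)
The plan is to treat injectivity and surjectivity of each $i$ separately. Injectivity is immediate: if a homeomorphism $f$ of $\mathbb S^1$ induces the identity on $\mathcal K_{geod}$ then $\{f(x),f(y)\}=\{x,y\}$ for all $x\neq y$, which forces $f=\mathrm{id}$ by an elementary argument; similarly an isometry of $\h$ inducing the identity on $\mathcal K_{horo}$ (resp.\ $\mathcal K_{hyper}$) fixes every center (resp.\ every endpoint pair), hence every point of $\partial\h$, so is the identity. Thus everything is in surjectivity, and the common strategy is to reconstruct from the abstract graph enough of the geometry of $\h$ — its ideal circle in the geodesic case, ultimately the whole plane in the other two — that an automorphism $\phi$ descends to a map of the required kind; for horocycles and hypercycles we then close the loop with \Cref{main} and \Cref{main2}.

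\textbf{Geodesic case.} Fix $\phi\in\mathrm{Aut}(\mathcal K_{geod})$. The core is a combinatorial recognition of the pencils $V_p:=\{\, g : p\in\bar g\cap\partial\h \,\}$, $p\in\partial\h$. One first characterizes in graph-theoretic terms the relation ``$g_1,g_2$ share an ideal endpoint'' — for instance by isolating the small rigid configurations (ideal triangles and fans) through which two asymptotic geodesics, but no two crossing or ultraparallel ones, are jointly related — and hence the maximal families of pairwise-asymptotic geodesics; the pencils $V_p$ are then picked out from the finite such families by being infinite, or more robustly by carrying an intrinsic separation structure that the exotic maximal families do not. Then $\phi$ permutes the $V_p$'s, giving a bijection $f\colon\partial\h\to\partial\h$ with $\phi(V_p)=V_{f(p)}$; since $V_p\cap V_q=\{[p,q]\}$ for $p\neq q$ and every geodesic is of this form, $\phi=\hat f$ on all vertices. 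Because $\phi$ preserves adjacency and a geodesic is exactly its endpoint pair, $f$ preserves linking of pairs of points of $\mathbb S^1$, hence the cyclic order up to reversal; such a bijection of $\mathbb S^1$ is automatically continuous, so $f\in\mathrm{Homeo}(\mathbb S^1)$. As every element of $\mathrm{Homeo}(\mathbb S^1)$ conversely induces an automorphism, $i$ is onto.

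\textbf{Horocycle and hypercycle case.} Fix $\phi\in\mathrm{Aut}(\mathcal K_{horo})$; the hypercycle case is identical with \Cref{main2} replacing \Cref{main}. The goal is to produce a bijection $F\colon\h\to\h$ that sends horocycles to horocycles and satisfies $\phi(h)=F(h)$ for every horocycle $h$: \Cref{main} then gives $F\in\mathrm{Isom}(\h)$, whence $\phi=i(F)$. To build $F$ we reconstruct points of $\h$ from the graph. First recover the relation ``$h_1,h_2$ are concentric'' as a graph-theoretic relation — concentric horocycles are always disjoint, hence adjacent, and the pattern of their common neighbours pins down concentricity — and with it the families $\Ho(p)$ and their nesting order \eqref{equ}. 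Next recover the dichotomy on non-edges between tangent (one common point) and secant (two common points) pairs, again through the structure of common neighbours. Finally identify a point $z\in\h$ with the family of all horocycles through $z$, characterized by an appropriate maximality/consistency condition built from the data above; show $\phi$ permutes these families, and read off $F$. By construction $F$ maps each horocycle, regarded as its set of incident points, onto a horocycle, so \Cref{main} applies.

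\textbf{Main obstacle.} The hard part throughout is the combinatorial reconstruction. For $\mathcal K_{geod}$ it is isolating the pencils $V_p$: coarse invariants like cardinality or maximality of cliques do not suffice, because $\h$ carries infinite, ``self-similar'' maximal families of pairwise-disjoint geodesics (ideal triangulations and the like), and one must find a graph property enjoyed by pencils alone. For $\mathcal K_{horo}$ and $\mathcal K_{hyper}$ the obstacle is sharper: tangency and secancy are both recorded merely as non-edges, so telling them apart — and, more generally, recovering metric-scale data such as the nesting of concentric horocycles — must be extracted entirely from second-order features of the disjointness relation. This is precisely the source of the contrast stressed in the introduction: a geodesic is nothing but its endpoint pair, so $\mathcal K_{geod}$ remembers only the circular order and $\mathrm{Aut}(\mathcal K_{geod})$ is as big as $\mathrm{Homeo}(\mathbb S^1)$, whereas a horocycle or hypercycle also remembers a size, so $\mathcal K_{horo}$ and $\mathcal K_{hyper}$ retain enough of the hyperbolic metric to force $\mathrm{Aut}$ down to $\mathrm{Isom}(\h)$.
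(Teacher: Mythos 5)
Your proposal is an architecture, not a proof: at every point where the actual mathematical content of the theorem lives, you write ``one first characterizes\dots'' or ``recover the relation\dots through the structure of common neighbours'' and move on, and your closing paragraph concedes that these are exactly the hard parts. Concretely, in the geodesic case the whole theorem reduces to showing that the relation ``$g_1,g_2$ share an ideal endpoint'' is recognizable inside $\mathcal{K}_{geod}$; you gesture at ``ideal triangles and fans'' but supply no configuration and no argument, whereas the paper's \Cref{earth} exhibits an explicit four-geodesic pattern $\{g_1,g_2,h_1,h_2\}$ whose intersection properties force the conclusion. (Your surrounding steps are fine: the injectivity argument, the passage from pencils to a boundary bijection, and the fact that a linking-preserving bijection of $\mathbb{S}^1$ is automatically a homeomorphism are all correct, and the identification of $\mathrm{Aut}(\mathcal{K}_{geod})$ with all of $\mathrm{Homeo}(\mathbb{S}^1)$ matches the statement.) In the horocycle case the gaps are worse, because your plan needs three separate reconstructions --- concentricity among adjacent pairs, tangency versus double intersection among non-adjacent pairs, and the identification of a point of $\h$ with a distinguished family of horocycles --- and none is carried out. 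The last one is not even obviously available: you would need to characterize, purely in terms of disjointness, which ``maximal consistent'' families of pairwise-intersecting horocycles are of the form $\{h : z\in h\}$, and the natural first guesses require real work to verify. The paper does prove graph-theoretic versions of the first two (the order on $\Ho(p)$ via \Cref{jet}, tangency via continuous families in \Cref{gent}), but never needs the third.

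It is worth noting that your intended route is genuinely different from the paper's, and the difference is where the missing work would have to go. You want to rebuild the point set of $\h$ from $\mathcal{K}_{horo}$ so as to produce a bijection $F:\h\to\h$ and then invoke \Cref{main} (and \Cref{main2} for hypercycles). The paper avoids point reconstruction entirely: it normalizes an automorphism of $\mathcal{K}_{horo}$ by postcomposing with isometries until two specific tangent horocycles are fixed, then propagates fixedness to all vertices by the tangency-chain and dyadic-density argument of \Cref{2} together with a pinching argument, concluding that the normalized automorphism is the identity on the graph; the hypercycle case is then reduced to the horocycle case by taking limits of continuous families (\Cref{disj}, \Cref{lim}) rather than by citing \Cref{main2}. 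Your approach could in principle work and would make Theorems A and B do the heavy lifting, but as written the reconstruction lemmas it depends on are all unproved, so the argument does not yet establish the theorem.
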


\Cref{ivan} suggests that horocycles and hypercycles are more rigid than geodesics in some sense, since their automorphisms group are smaller than the one for geodesics. This comes from the fact that the $\mathcal{K}_{horo}$ (respectively $\mathcal{K}_{hyper}$) is richer than $\mathcal{K}_{geod}$ since horocycles (respectively hypercycles) can be tangent or can intersect twice in $\h$. These facts endow $\mathcal{K}_{horo}$ and $\mathcal{K}_{hyper}$ with much more properties.\\
Every isometry $f$ induces a map $\partial{f}$ on the boundary: \textit{a boundary map}. Therefore, the group $\partial\mathrm{Isom}(\h)$ of boundary maps induced by isometries is a subgroup of $\mathrm{Homeo}(\mathbb{S}^1)$. Moreover, by postcomposing an element $f\in\mathrm{Homeo}(\mathbb{S}^1)$ by an element in $\partial\mathrm{Isom}(\h)$, we get an orientation-preserving homeomorphism of $\mathbb{S}^1$; and by Thurston earthquake theorem such homeomorphisms are induced by so called \textit{\textbf{earthquake maps}} on $\h$ (see \cite{Hu} for more details on earthquake maps and Thurston's earthquake theorem).
\begin{figure}[htbp]
\begin{center}
\includegraphics[scale=0.4]{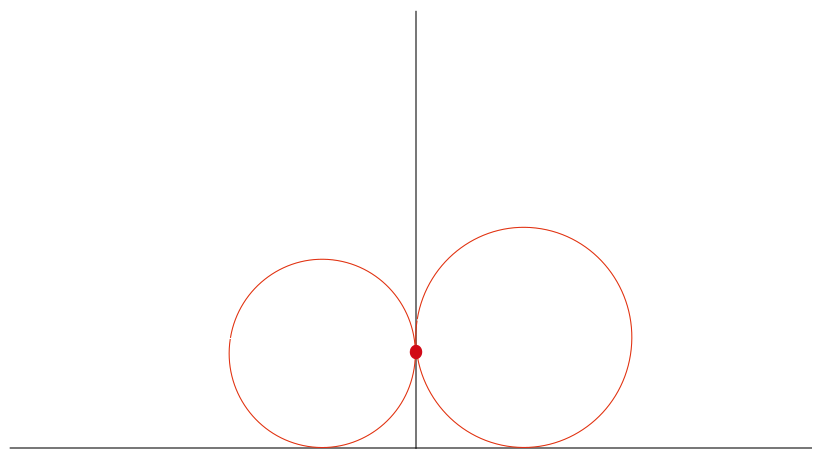}\put(10,30){$\longrightarrow$}\hspace{1,7cm}\includegraphics[scale=0.4]{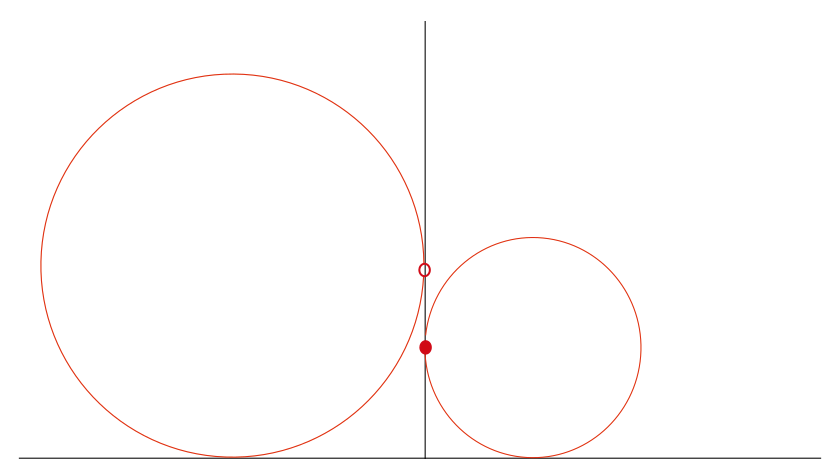}
\caption{An earthquake map $f:\h\rightarrow\h$ defined by $f(z)=2z$ if $\mathrm{Re}(z)<0$ and $f(z)=z$ otherwise. The map $f$ sends tangent horocycles to disjoint ones and hence do not preserved the graph structure. Indeed, the map $f$ does not send horocycles to horocycles (nor does it for geodesics). Since a horocycle is determined by its center and its radius, what \Cref{ivan} tells us is that there is no way for $f$ to induce a radius fuction $\tilde{f}$ such that $\hat{f}:\mathcal{HO}\rightarrow\mathcal{HO}, h(x,r)\mapsto h(\partial{f}(x),\tilde{f}(r))$ preserves the graph structure of $\mathcal{K}_{horo}$.}
\label{quake}
\end{center}
\end{figure}

 So, an earthquake map in $\h$ induced automorphism on an $\mathcal{K}_{geod}$ but not on $\mathcal{K}_{horo}$ nor on $\mathcal{K}_{hyper}$. One can easily guess that an earthquake is more likely to break tangencies between horocycles (respectively hypercycles) than transverse intersections between geodesics (see \Cref{quake} for an example of a simple earthquake and how it breaks tangencies on horocycles). And this what \Cref{ivan} tells us.  The difference between automorphisms induced by earthquake maps and those induced by isometries is that the latter sends geodesics that intersect at a single point to geodesics with the same property. So every automorphism $\hat{f}$ of $\mathcal{K}_{geod}$, which sends geodesics intersecting at a single point to geodesics of the same type is induced by an isometry.\vspace{0.2cm} 
\end{paragraph}

\begin{paragraph}{\textbf{Overview}} The proof of \Cref{main} relies on three facts. First we show that a bijection $f:\h\rightarrow\h$ that sends horocycles to horocycles extends to the boundary of $\h$. And then, we prove that after postcomposing with an isometry, one can show that $f$ fixes pointwise $\partial \h$ and the set $L:=\{z\in\h, \mathrm{Im}(z)=1\}$. The proof ends up by showing that if $f$ fixes pointwise $\partial\h$ and a horizontal line $L_y:=\{z\in\h, \mathrm{Im}(z)=y\}$ then $f(z)=z$ whenever $\mathrm{Im}(z)\neq y$.  For \Cref{main2} we show that a hypercycle-preserving bijection $f$ has to send geodesic to geodesic and Jeffers theorem implies that it is an isometry.  

Finally, we will show \Cref{ivan}. For the case of geodesics, we show that an automorphism $\hat{f}$ of $\mathcal{K}_{geod}$ induced a boundary map $\partial\hat{f}\in\mathrm{Homeo}(\mathbb{S}^1)$. In the case of horocycles we use postcomposition techniques to show that a given automorphism comes from an isometry. We end the proof for hypercycles by reduction to the horocycles case namely by showing that a non trivial element $\hat{f}\in\mathrm{Aut}(\mathcal{K}_{hyper})$ induces a non trivial element $\hat{f}^*\in\mathrm{Aut}(\mathcal{K}_{horo})$.  
\end{paragraph}\vspace{0.2cm}
\begin{paragraph}{\textbf{Acknowledgments:}} The athours was inspired by Benson Farb talk on reconstruction problems \cite{Farb}. They would like to thank Dan Margalit for comments on this work. The authors are grateful to Katherine Williams Booth, Ryan Dickmann, Masseye Gaye, Abdou Aziz Diop and Amadou Sy for their comments and the interest shown to this work.       
\end{paragraph}
\section{Proof of \Cref{main}}
The proof of \Cref{main} relies on two big facts. First we show that a horocycle-preserving bijection extends to a bijection on the boundary of $\h$ (see \Cref{projet}). Then, we show that up to postcomposition by isometries, it fixes pointwise $\partial\h$ and $L:=\{z:=x+i\}$ (see \Cref{ctr}) which helps to end the proof. 
\\We recall some basic facts about horocycles that will be useful for what follows.
\begin{itemize}
\item If $x\in\partial\h$ and $z\in\h$, there is a unique horocycle, denoted $h_z(x)$, centered at $x$ and passing through $z$.
\item If $z_1$ and $z_2$ are two distinct points in $\h$, there are exactly two horocycles passing through $z_1$ and $z_2$.\item Let $z_1$, $z_2$ and $z_3$ be three distinct points with $(z_1, z_2)\in\h\times\h$ and $z_3\in\h\cup\partial\h$.  There is at most one horocycle passing through $z_1$, $z_2$ and $z_3$. 
\end{itemize} 

Now, let $f:\h\rightarrow\h$ be a bijection such that for all $h\in\Ho$, $f(h)\in~\Ho$. Two distinct horocycles intersect at most twice and the intersection pattern between horocycles is preserved by $f$. We aim to show that $f$ is an isometry.

\begin{paragraph}{\textbf{Extension of $f$ to $\partial\h$.}} Recall for every $p\in\partial\h$, $\Ho(p)$ denotes the set of all horocycles centered at $p$.  The partial order (\ref{equ}) on $\Ho$ induces a total order on ~$\Ho(p)$. The following equivalence gives another way to see that partial order in terms of intersections:
$$h_1\leq h_2 \iff \forall h\in\Ho, h\cap h_1\cap\h\neq\emptyset \implies h\cap h_2\cap\h\neq\emptyset.$$
This follows from the Jordan theorem since $h_2\in\Ho(p)$ separates $\h\cup\partial\h$ into two components one of which contains $h_1$. As a consequence, we have:  
\begin{lemma}\label{jet}
Let $h_1$ and $h_2$ be two elements of $\Ho(p)$ such that $h_1\leq h_2$. Then $f(h_1)$ and $f(h_2)$ have the same center and the order is preserved on the images: $f(h_1)\leq f(h_2)$.  
\end{lemma}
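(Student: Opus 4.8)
The plan is to upgrade $f$ to a bijection of the \emph{set} $\Ho$ of all horocycles, to observe that this induced bijection preserves the relation ``meeting inside $\h$'', and then to read off $f(h_1)\leq f(h_2)$ directly from the intersection‑theoretic description of $\leq$ recalled just above; the equality of centers will then come for free.

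First I would check that $h\mapsto f(h)$ defines a bijection $f_*\colon\Ho\to\Ho$. Injectivity is immediate: a horocycle is determined by its trace in $\h$, so $f_*(h)=f_*(h')$ forces $f(h\cap\h)=f(h'\cap\h)$, hence $h\cap\h=h'\cap\h$ since $f$ is injective on $\h$. For surjectivity I would use the second basic fact recalled above: given a horocycle $g$, pick two distinct points $w_1,w_2\in g\cap\h$, set $z_i=f^{-1}(w_i)$, and let $a,b$ be the two horocycles through $z_1$ and $z_2$; then $f_*(a)$ and $f_*(b)$ both pass through $w_1$ and $w_2$ and are distinct, so $\{f_*(a),f_*(b)\}$ is exactly the two‑element set of horocycles through $w_1,w_2$, one of which is $g$. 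Next, since $f$ is a bijection of $\h$, for any two horocycles $h,h'$ one has $f_*(h)\cap f_*(h')\cap\h=f\bigl((h\cap h')\cap\h\bigr)$, so $h\cap h'\cap\h\neq\emptyset$ if and only if $f_*(h)\cap f_*(h')\cap\h\neq\emptyset$; that is, $f_*$ preserves and reflects the ``meeting inside $\h$'' relation on $\Ho$.

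Now let $h_1\leq h_2$ in $\Ho(p)$. By the equivalence displayed above, this says precisely that every horocycle $h$ (other than $h_1,h_2$ themselves) that meets $h_1$ inside $\h$ also meets $h_2$ inside $\h$ --- a statement about the pair $(h_1,h_2)$ phrased entirely in terms of the intersection relation and of equality of horocycles. Applying $f_*$, which by the previous step preserves that relation and is a bijection of $\Ho$ (so the quantifier ``$\forall h\in\Ho$'' is transported faithfully), we conclude that every horocycle meeting $f(h_1)$ inside $\h$ meets $f(h_2)$ inside $\h$, which by the same equivalence is exactly $f(h_1)\leq f(h_2)$. Finally, as recalled in the introduction, $f(h_1)\leq f(h_2)$ forces $f(h_1)$ and $f(h_2)$ to share a center, which is the remaining assertion.

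I expect the one genuinely load‑bearing point to be the surjectivity of $f_*$: it is what allows the universal quantifier in the criterion for $\leq$ to be pushed through $f_*$, and it is the only place the ``two points lie on exactly two horocycles'' input enters. A secondary subtlety is that this criterion must be read with $h$ ranging over horocycles \emph{distinct from $h_1$ and $h_2$} (otherwise the trivial self‑intersection $h_1\cap h_1\cap\h\neq\emptyset$ would be fed in and break the equivalence); once that reading is fixed, the criterion is manifestly invariant under $f_*$ and the lemma follows formally.
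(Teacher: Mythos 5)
Your proof is correct and takes essentially the same route as the paper: both transport the intersection-theoretic characterization of $\leq$ through $f$ (the paper phrases it contrapositively, pulling back a witness horocycle via $f^{-1}$). You are in fact slightly more careful on two points the paper leaves implicit --- the surjectivity of the induced map on $\Ho$, which the paper needs in order to treat $f^{-1}(h)$ as a horocycle, and the exclusion of $h=h_1$ from the universal quantifier in the criterion for $\leq$.
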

\begin{proof}
Assume that $h_1\leq h_2$ while $f(h_1)$ and $f(h_2)$ have different centers. Then, there exists $h\in\Ho$ such that $h\cap f(h_1)\cap\h\neq \emptyset$ and $h\cap f(h_2)\cap\h=\emptyset$. Therefore $f^{-1}(h)\cap h_1\cap\h\neq\emptyset$ and $f^{-1}(h)\cap h_2\cap\h=\emptyset$ which is absurd. Using the same technique, we can show that the order is preserved.  
\end{proof}
\begin{figure}[htbp]
\begin{center}
\includegraphics[scale=0.5]{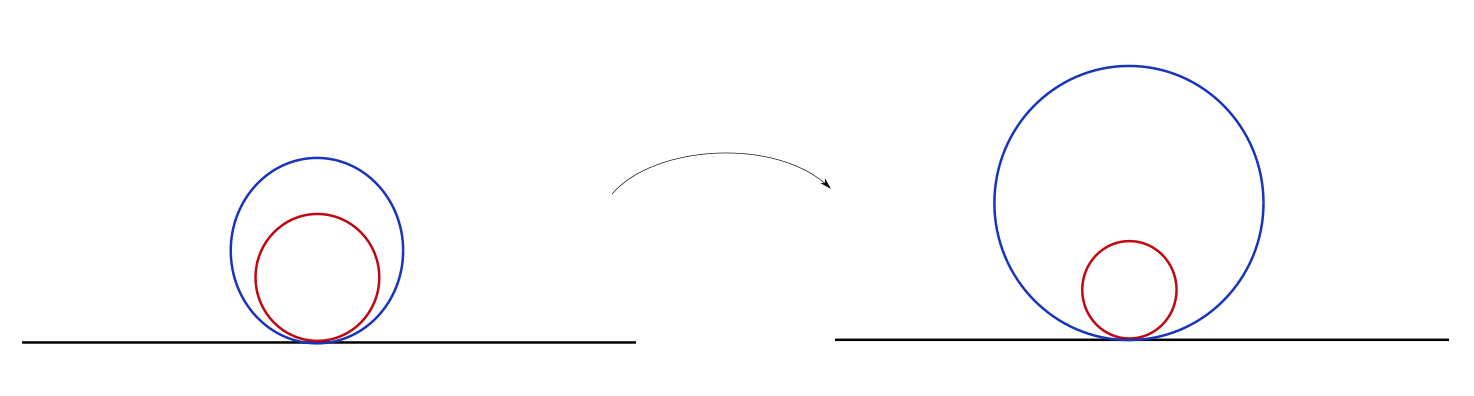}
\put(-190,70){$f$}
\put(-295,40){\tiny{$h_1$}}
\put(-295,63){\tiny{$h_2$}}
\put(-95,41){\tiny{$f(h_1)$}}
\put(-95,85){\tiny{$f(h_2)$}}
\caption{Image of two horocycles with the same center}
\label{default}
\end{center}
\end{figure}
\begin{proposition}\label{projet} Let $f:\h\rightarrow\h$ be a horocycle-preserving bijection. Then $f$ extends to a well-defined bijection $f:\partial\h\rightarrow\partial\h$. 
\end{proposition}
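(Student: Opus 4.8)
The plan is to define the boundary extension directly from the ``center'' data of horocycles, and then to verify it is a bijection. Fix $p\in\partial\h$. Since the partial order \eqref{equ} restricts to a total order on $\Ho(p)$, any two elements of $\Ho(p)$ are comparable, so \Cref{jet} applies to each such pair and shows that all the horocycles in $f(\Ho(p))$ share a single common center; I would define $\hat f(p)$ to be that center, so that by construction $f(\Ho(p))\subseteq\Ho(\hat f(p))$. The proposition then reduces to showing that $\hat f\colon\partial\h\to\partial\h$ is injective and surjective.

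For injectivity, suppose $\hat f(p)=\hat f(q)=:q'$ with $p\neq q$, and choose $h\in\Ho(p)$, $h'\in\Ho(q)$, so that $f(h),f(h')\in\Ho(q')$. As $\Ho(q')$ is totally ordered we may assume $f(h)\le f(h')$, and I would transport this relation back through $f$ using only that $f$ is a bijection of $\h$ carrying $\Ho$ to $\Ho$: if $k\in\Ho$ meets $h$ inside $\h$, then $f(k)$ meets $f(h)$ inside $\h$, hence meets $f(h')$ inside $\h$, and pulling a point of that intersection back by $f^{-1}$ (legitimate since $f$ is injective on $\h$) produces a point of $k\cap h'\cap\h$. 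By the intersection characterisation of \eqref{equ} this gives $h\le h'$, which forces $h$ and $h'$ to have the same center and contradicts $p\neq q$. Hence $\hat f$ is injective.

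Surjectivity of $\hat f$ is the step I expect to be the \emph{main obstacle}, since here one must cope with the fact that a horocycle is not pinned down by a pair of its points. Given $q'\in\partial\h$, I would pick $h'\in\Ho(q')$ together with two distinct points $z_1',z_2'\in h'$, and set $z_i:=f^{-1}(z_i')\in\h$. By the basic facts recalled above there are exactly two horocycles $k^1,k^2$ through $z_1$ and $z_2$; their images $f(k^1),f(k^2)$ both pass through $z_1'$ and $z_2'$ and are distinct (injectivity of $f$ on $\h$, hence on $\Ho$), so $\{f(k^1),f(k^2)\}$ is precisely the pair of horocycles through $z_1'$ and $z_2'$. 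Since $h'$ is one of that pair, $h'=f(k^i)$ for some $i$, whence $\hat f(\text{center of }k^i)=q'$. The same computation shows that $f^{-1}(h')$ is always a horocycle, i.e.\ $f^{-1}$ is horocycle-preserving, so the inclusion $f(\Ho(p))\subseteq\Ho(\hat f(p))$ is in fact an equality and $\hat f$ genuinely extends $f$, matching up the horocyclic foliation centered at $p$ with the one centered at $\hat f(p)$. Apart from this pair-of-horocycles argument, the proof is only bookkeeping with \Cref{jet}, the total order on each $\Ho(p)$, and the injectivity of $f$ on $\h$.
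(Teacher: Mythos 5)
Your construction of the boundary map and your injectivity argument coincide with the paper's: both define the extension at $p$ as the common center of $f(\Ho(p))$, justified by \Cref{jet}, and both derive injectivity from the fact that comparability in the order (\ref{equ}) is transported back through $f$ (you make this transport explicit via the intersection characterisation, which is the careful version of the paper's ``since $f$ also preserves order''). The one place you go beyond the paper is surjectivity: the paper's proof stops at injectivity and simply declares the extension a bijection, whereas you supply the missing step by showing that $f^{-1}$ is itself horocycle-preserving --- pulling back two points of a given horocycle $h'$, noting there are exactly two horocycles through the preimages, and matching their distinct images with the two horocycles through the original points, one of which must be $h'$. This is a correct and genuinely needed addition: a priori $f$ could map $\Ho$ onto a proper subset of $\Ho$, which would leave boundary points outside the image of the extension, so some such argument (or an explicit remark that the hypotheses are symmetric in $f$ and $f^{-1}$) is required to close the paper's proof.
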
 
\begin{proof} Let $x\in\partial\h$ and $h_z(x)$ be any horocycle centered at $x$. We set $f(x):=y$ where $y$ is the center of $f(h_z(x))$. By \Cref{jet}, $f(x)$ depends only on $x$; so it is a well-defined map. Moreover, if $f(x_1)=f(x_2)$, then $f(h_{z_1}(x_1))$ and $f(h_{z_2}(x_2))$ have the same center and one can assume that $f(h_{z_1}(x_1))<f(h_{z_2}(x_2))$. Since $f$ also preserves order, it follows that $h_{z_1}(x_1)<h_{z_2}(x_2)$ which implies that $x_1=x_2$. Thus, $f$ is a bijection.   
\end{proof}
\end{paragraph}

\begin{paragraph}{\textbf{Postcomposition and fixed sets.}} The action of $\mathrm{Isom}(\h)$ on $\h$ extends to $\partial\h$ and is $2$-transitive on $\partial\h$. In fact, if $x$ and $y$ are two distinct points in $\R$, $\phi:z\mapsto\frac{1}{z-x}-\frac{1}{y-x}$ is an isometry of $\h$ and $\phi(x)=\infty$, $\phi(y)=0$. So, after postcomposing $f$ with an isometry, we can assume that $f(0)=0$ and $f(\infty)=\infty$.
\begin{lemma}
Let $f:\h\rightarrow\h$ be a geodesic-preserving bijection. Then, there exists an isometry $j$ such that $(j\circ f)(n)=n$ for all $n\in\mathbb{Z}$. 
\end{lemma}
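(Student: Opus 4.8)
If one invokes Jeffers' theorem \cite{Jef} the statement is immediate, since $f$ is then an isometry and $j=f^{-1}$ works; I would instead give a direct argument, as it previews the postcomposition technique used below for horocycles. The plan is to extend $f$ to the boundary, normalize three boundary points by an isometry, and then propagate rigidity to every integer.

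\emph{First, extension to $\partial\h$.} Because a geodesic is determined by its pair of ideal endpoints, $f$ induces a bijection of the set $\binom{\partial\h}{2}$ of unordered pairs, carrying the endpoints of $g$ to those of $f(g)$. Two geodesics meet inside $\h$ exactly when their endpoint pairs are linked on $\partial\h$, and $f$ preserves intersection in $\h$ (it is a bijection of $\h$ taking geodesics to geodesics), so the induced map on pairs preserves linking. I would then use that a bijection of $\binom{\partial\h}{2}$ preserving the linking relation is induced by a bijection $\partial f\colon\partial\h\to\partial\h$ preserving the cyclic order up to reversal, i.e.\ a monotone bijection of the circle; this is the desired extension, and I write $f(x):=\partial f(x)$ for $x\in\partial\h$.

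\emph{Next, normalization.} Since $\Isom(\h)$ acts $3$-transitively on $\partial\h$, I would pick the isometry $j$ with $j(\partial f(k))=k$ for $k\in\{0,1,\infty\}$ (these values are distinct, as $\partial f$ is injective) and replace $f$ by $g:=j\circ f$, still a geodesic-preserving bijection, whose boundary map $\partial g$ now fixes $0,1,\infty$. An orientation-reversing homeomorphism of the circle fixes at most two points, so $\partial g$ is orientation-preserving; hence it restricts to an increasing bijection of $\R$ fixing $0$ and $1$, and $g$ permutes the vertical geodesics ending at $\infty$.

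\emph{Finally, propagation to all integers.} For a geodesic $\gamma$ the reflection $R_\gamma$ is the unique nontrivial geodesic-preserving involution of $\h$ fixing $\gamma$ pointwise; since $g$ carries geodesics to geodesics, $gR_\gamma g^{-1}$ is such an involution for $g(\gamma)$, so $gR_\gamma g^{-1}=R_{g(\gamma)}$. Passing to boundary actions, $\partial g$ respects harmonic conjugation: $\partial g\bigl(R_\gamma(x)\bigr)=R_{g(\gamma)}\bigl(\partial g(x)\bigr)$. Taking $\gamma=(m,\infty)$ with $m$ an already-fixed point, and using that $R_{(m,\infty)}$ acts on $\partial\h$ by $x\mapsto 2m-x$, this reads $\partial g(2m-x)=2m-\partial g(x)$. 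I would then induct: from $0,1,\infty$ fixed, the choice $m=1,\ x=0$ gives $\partial g(2)=2$; if $0,\dots,n$ are fixed, then $m=n,\ x=n-1$ gives $\partial g(n+1)=2n-(n-1)=n+1$, and $m=0$ reflects these to the negative integers. Thus $(j\circ f)(n)=\partial g(n)=n$ for all $n\in\Z$.

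The main obstacle is the rigidity invoked in the last step: that the hyperbolic reflection $R_\gamma$ really is the only nontrivial geodesic-preserving involution fixing $\gamma$ pointwise, which is what forces $g$ to conjugate reflections to reflections and hence to respect harmonic conjugation on $\partial\h$. This is precisely the projective-rigidity (fundamental-theorem-of-projective-geometry) input that also underlies Jeffers' theorem; by contrast the boundary extension of the first step and the dihedral induction of the last are routine.
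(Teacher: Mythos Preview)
The lemma as printed contains a typo: in context (Section~2 is the proof of Theorem~A on horocycles, and the lemma sits between Proposition~\ref{projet} and Lemma~\ref{2}) the hypothesis should read \emph{horocycle}-preserving, and indeed the paper's proof uses only horocycles. You took the printed hypothesis at face value and proved a statement about geodesic-preserving bijections, so your argument is not comparable to the paper's and does not feed into the rest of Section~2.

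For reference, the paper's argument runs as follows: use $2$-transitivity on $\partial\h$ to arrange $f(0)=0$, $f(\infty)=\infty$; the tangent pair $h_i(0),\,h_i(\infty)$ is then sent to a tangent pair centered at $0$ and $\infty$, hence tangent at some $iy$, and a dilation $z\mapsto z/y$ fixes $i$ as well. From there the chain $(h_n)_{n\in\Z}$ of horocycles of Euclidean diameter $1$ centered at the integers, each tangent to $h_i(\infty)$ and to its two neighbours, is propagated: once $h_0$ is fixed, $f(h_1)\in\{h_1,h_{-1}\}$ by tangency, a reflection settles the sign, and the whole chain (hence every integer center) is fixed. This tangency-chain mechanism is the engine of the subsequent Lemma~\ref{2} and Proposition~\ref{ctr}; your geodesic argument does not supply it.

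Even on its own terms your proof has a genuine gap. The claim that $R_\gamma$ is the \emph{unique} nontrivial geodesic-preserving involution of $\h$ fixing $\gamma$ pointwise is exactly what you need, and you correctly flag it as ``the projective-rigidity input that also underlies Jeffers' theorem''; but that means your ``direct'' argument is not independent of Jeffers after all. Without that uniqueness you cannot conclude $gR_\gamma g^{-1}=R_{g(\gamma)}$, and the harmonic-conjugation step collapses. (A priori there could be geodesic-preserving involutions fixing $\gamma$ pointwise other than $\id$ and $R_\gamma$; ruling them out is essentially the content of the theorem you are trying to avoid.) If you are willing to invoke Jeffers, then as you note in your first sentence the lemma---in either reading---is immediate; if not, you should instead reproduce the paper's horocycle-tangency argument.
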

 \begin{proof} Since the action of $\mathrm{Isom}(\h)$ is 2-transitive, there exists an isometry $\phi_1$ such that $\phi_1(f(0))=0$ and $\phi_1(f(\infty))=\infty$. Therefore, $\phi_1\circ f$ fixes $0$ and $\infty$. Now, let $h_i(\infty)$ be the horocycle centered at $\infty$ and passing through $i$ and $h_i(0)$ be the one centered at $0$ and passing through ~$i$. Since $\phi_1\circ f$ fixes $0$ and $\infty$, $(\phi_1\circ f)(h_i(\infty))$ and $(\phi_1\circ f)(h_i(0))$ are centered at $\infty$ and $0$ respectively, and tangent at a point ~$iy$. It follows that $\phi_1\circ f(h_i(\infty))=h_{iy}(\infty)$ and $\phi_1\circ f(h_i(0))=h_{iy}(0)$. Again, by postcomposing with $\phi_2:z\rightarrow \frac{z}{y}$ we have that $f':=\phi_1\circ \phi_2\circ f$ fixes $0$, $i$ and $\infty$ and thereby we have $f'(h_i(0))=h_i(0)$ and $f'(h_i(\infty))=h_i(\infty)$. 
Let $(h_n)_{n\in\z}$ be the sequence of horocycles where $h_n:=h_{n+i}(n)$ is centered at $n$ and passes through $n+i$ (see Figure ~\ref{seq}).

\begin{figure}[htbp]
\begin{center}
\includegraphics[scale=0.8]{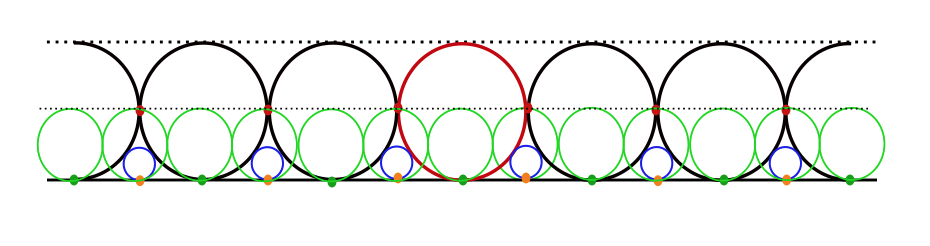}
\caption{The sequence $(h_n)_{n\in\z}$ made by horocycles centered at integer point with ecludiean diameter equal 1; where two consecutive horocycles are tangent. More generally, for every $k\in\mathbb{N}$, the sequence $(h^k_n)_{n\in\z}$ is made by geodesics centered at $\mathcal{D}_k$ with Euclidean diameter equal $\frac{1}{2^k}$ with consecutive horocycles that are tangent.}  
\label{seq}
\end{center}
\end{figure}
 Since $h_i(\infty)$ is fixed by $f$ and $f'(h_0)=h_0$ then $f'(h_1)$ is either equal to $h_1$ or $h_{-1}$ using the fact that $f'(h_1)$ is tangent to $h_0$ and $h_i(\infty)$. By postcomposing again with a reflection $r$ along $(Oy)$ we have that $h_1$ is fixed setwise by $f":=r\circ \phi_2\circ \phi_1\circ f$. Therefore ~$h_n$ is fixed setwise by $f"$ for all $n\in\z$ since the tangency pattern between the horocylces is preserved by $f"$. So, $j\circ f(m)=m$ for all $m\in\z$; where $j:=r\circ \phi_2\circ \phi_1$.
 \end{proof}
 
  Now, we can assume without lost of generality that $f:\h\rightarrow\h$ is a horocycle-preserving bijection such that fixes pointwise $\mathbb{Z}$.  
\\ Let $\mathcal{D}_k$ be the sequence of subsets of $\R$ defined by induction as follow:
 \begin{itemize}
 \item $\mathcal{D}_0=\z$,
 \item if $\mathcal{D}_k:=\{\dots<x^k_{-1}<x^k_0<x_1^k<\dots\}$ then $\mathcal{D}_{k+1}=
 \mathcal{D}_k\cup\{\displaystyle{x_{n}^{k+1}=\frac{x_{n+1}^k+x_n^k}{2}},
 n\in\mathbb{Z}\}$. 
 \end{itemize}
  Set $\mathcal{D}:=\displaystyle{\cup_k \mathcal{D}_k}$; $\mathcal{D}$ is the set of dyadic numbers in $\R$ and it is a dense set. 
 \begin{lemma}\label{2}
For all $x$ in $\mathcal{D}$, $f(x)=x$ and the horocycle $h_{x+i}(x)$ is fixed setwise by $f$.  
 \end{lemma}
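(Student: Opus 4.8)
The plan is to push the normalisation already obtained — that, after postcomposing by a suitable isometry, $f$ fixes $\mathbb{Z}\cup\{\infty\}$ pointwise and fixes the point $i$ — all the way to the stronger statement that $f$ is the identity on $\h$, which in particular contains \Cref{2} (and in fact proves \Cref{main}). Two features of $f$ will be used throughout. By \Cref{jet} and \Cref{projet}, $f$ sends the horocycle $h(p,r)$ of centre $p$ and Euclidean radius $r$ to a horocycle of centre $f(p)$ whose Euclidean radius depends only on $r$ and is a strictly increasing function of it; and, being a bijection of $\h$ carrying horocycles to horocycles, $f$ preserves the tangency pattern — two distinct horocycles meet in a single point of $\h$ exactly when they are tangent there. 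Writing $L_t=h(\infty,t)$ for the horizontal line at height $t$, the fact that $f$ fixes $\mathbb{Z}\cup\{\infty\}$ pointwise yields increasing bijections $\phi_n,\psi\colon(0,\infty)\to(0,\infty)$ with $f(h(n,r))=h(n,\phi_n(r))$ and $f(L_t)=L_{\psi(t)}$, while the fixed point $i\in h(0,\tfrac12)$ forces $\phi_0(\tfrac12)=\tfrac12$.

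Next I read off functional equations from the half-plane picture. Among horizontal lines, $h(n,r)$ is tangent to $L_{2r}$ alone, and for $n\ne m$ the horocycles $h(n,r)$ and $h(m,s)$ are tangent exactly when $4rs=(n-m)^2$. Applying $f$ and invoking tangency-invariance, the first relation gives $\psi(2r)=2\phi_n(r)$ for all $n$ and $r$ — so $\phi_n$ is independent of $n$, call it $\phi$, and $\psi(t)=2\phi(t/2)$ — and the second gives $\phi(r)\phi(s)=rs$ whenever $4rs$ is a perfect square. Now bootstrap from $\phi(\tfrac12)=\tfrac12$: if $\phi$ fixes some $r_0$, then substituting $s=m^2/(4r_0)$ shows $\phi$ fixes $m^2/(4r_0)$ for every integer $m\ge 1$; applying this first with $r_0=\tfrac12$, then with each of the values $r_0=m^2/2$ so obtained, shows $\phi$ fixes $\{(m'/m)^2/2:m,m'\ge 1\}=\{q^2/2:q\in\mathbb{Q}_{>0}\}$, which is dense in $(0,\infty)$. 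As $\phi$ is monotone, $\phi=\mathrm{id}$, hence also $\psi=\mathrm{id}$, so $f$ fixes every $h(n,r)$ with $n\in\mathbb{Z}$ and every $L_t$ setwise.

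Finally, for $n\in\mathbb{Z}$, $t>0$ and $r>t/2$, the intersection $h(n,r)\cap L_t$ is the two-point set $\{(n-u,t),(n+u,t)\}$ with $u=\sqrt{t(2r-t)}$, and any $u>0$ occurs for a suitable $r$; each such pair is fixed setwise by $f$. Given an arbitrary $(x,t)\in\h$, pick two integers $n\ne n'$ both distinct from $x$; then $f(x,t)$ lies in $\{(x,t),(2n-x,t)\}$ and in $\{(x,t),(2n'-x,t)\}$, and since $2n-x\ne 2n'-x$ the only common element is $(x,t)$. Thus $f=\mathrm{id}$ on $\h$, so it fixes every horocycle setwise and every boundary point, giving \Cref{2}. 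The step I expect to be the real obstacle is the second paragraph: one has to both recognise the relation $\phi(r)\phi(s)=rs$ hiding in the tangency combinatorics (it holds only on the sparse set where $4rs$ is a square) and then notice that iterating it just twice already forces $\phi$ to agree with the identity on a dense set; everything else is bookkeeping in the upper half-plane.
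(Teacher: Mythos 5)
Your proof is correct, but it takes a genuinely different route from the paper's and in fact proves more than \Cref{2}: it establishes $j\circ f=\mathrm{id}$ outright, subsuming \Cref{ctr} and the concluding argument of \Cref{main}. The paper proceeds by induction on the dyadic levels $\mathcal{D}_k$: at each stage the fixed horocycles of diameter $2^{-k}$ are consecutively tangent at points lying on a common horizontal line, that line is therefore fixed setwise, and the horocycles tangent to it at those points pin down the next level of dyadic centres; the second half of the lemma is then obtained from tangency with $h_i(\infty)$. You instead encode the radius distortion in functions $\phi_n$ and $\psi$, extract the relations $\psi(2r)=2\phi_n(r)$ (tangency with horizontal lines, which also kills the dependence on $n$) and $\phi(r)\phi(s)=rs$ whenever $4rs$ is a perfect square (tangency between horocycles at distinct integer centres), and bootstrap from $\phi(1/2)=1/2$ to a dense set of fixed radii; monotonicity from \Cref{jet} then forces $\phi=\psi=\mathrm{id}$, and every point of $\h$ is recovered as the intersection of two fixed sets in two independent ways. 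Both arguments rest on the same two pillars --- preservation of intersection cardinality (hence tangency) by a bijection, and preservation of the order on $\Ho(p)$ --- but yours replaces the geometric level-by-level induction with a multiplicative functional equation, which is shorter and delivers all of \Cref{main} in one pass. One small caution: your opening sentence asserts that the Euclidean radius of $f(h(p,r))$ depends only on $r$, which is not what \Cref{jet} and \Cref{projet} give; that independence is precisely what your relation $\psi(2r)=2\phi_n(r)$ later proves for integer centres, and since the per-centre version is all your argument actually uses, the slip is harmless.
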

 \begin{proof} First, $f(n)=n$ for all $n\in\mathcal{D}_0=\z$ and every horocycle $h_n^0$ centered at $n$ with diameter $1$ is fixed setwise.

Now, assume that $\mathcal{D}_k$ is fixed pointwise by $f$ and every horocycle $h_n^k$ of diameter ~$\frac{1}{2^k}$ and centered at $x_n^k\in \mathcal{D}_k$ is fixed setwise. For every $n\in\z$, $$h_n^k\cap h_{n+1}^k=\{z_n^k:=\frac{x_{n+1}^k+x_n^k}{2}+\frac{1}{2^{k+1}}i\}.$$ 
 Since $h_n^k$ is a fixed set of $f$ for all $n$, we have $f(z_n^k)=z_n^k$ for all $n$. Therefore, the horizontal $L_k$ line passing through all $z_n^k$ is a fixed set. Let $h_{z_n^k}(\frac{x_n^k+x_{n+1}^k}{2})$ be the horocycle passing through $z_n^k$ and centered at $\frac{x_n^k+x_{n+1}^k}{2}$ (see \Cref{seq}).  That horocycle is tangent to $L_k$ at $z_n^k$. Since $f(z_n^k)=z_n^k$ and $f(L_k)=L_k$, then  $h_{z_n^k}(\frac{x_n^k+x_{n+1}^k}{2})$ is fixed setwise and this implies that $f(\frac{x_n^k+x_{n+1}^k}{2})=\frac{x_n^k+x_{n+1}^k}{2}$. So, $f(x)=x$ for $x\in\mathcal{D}_{k+1}$ and the proof follows by induction.
 
  Let $x\in \mathcal{D}$ and $h_{x+i}(x)\in\Ho$. Then, $f(x)=x$ and $f(x+i)\in h_i(\infty)$. Since $h_i(\infty)$ is tangent to $h_{x+i}(x)$, $f(h_{i}(\infty))=h_{i}(\infty)$ is tangent to $f(h_{x+i}(x))$. It implies that $f(x+i)=x+i$ and $h_{x+i}(x)$ is a fixed set of $f$.   
 \end{proof}

 \begin{proposition}\label{ctr} Let $f:\h\rightarrow\h$ be a horocycle-preserving bijection. Then, there exists an isometry $j$ such that $(j\circ f)|_{\partial\h}\equiv \mathrm{Id}$, and $j\circ f$ fixes pointwise $h_i(\infty)$.
 \end{proposition}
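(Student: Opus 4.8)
The plan is to first invoke the lemma preceding \Cref{2} together with \Cref{2} itself: postcomposing $f$ with the isometry $j$ obtained from those two statements, we may assume from the outset that $f$ fixes every dyadic number pointwise, fixes setwise each horocycle $h_n^k$ (the one centered at $x_n^k\in\mathcal D_k$ of Euclidean diameter $2^{-k}$), and — reading this off from the proofs of \Cref{2} and of the lemma before it — fixes setwise each horizontal line of Euclidean height $1,\tfrac12,\tfrac14,\dots$; the first of these is $h_i(\infty)$ and the rest are the lines $L_k$ that appear in the proof of \Cref{2}. Note also that $f$ fixes $\infty$ and $0$. With these reductions in place, it suffices to show that such an $f$ satisfies $\partial f=\mathrm{Id}$ on $\partial\h$ and fixes $h_i(\infty)$ pointwise.

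The main step will be to show $\partial f=\mathrm{Id}$. First I would record the elementary fact that two horocycles centered at points $x,y\in\R$ with Euclidean diameters $d_x,d_y$ meet in two points, are tangent, or are disjoint according as $(x-y)^2$ is less than, equal to, or greater than $d_xd_y$ (they are Euclidean circles of radii $d_x/2,d_y/2$ internally tangent to $\R$, so this is just the condition on the distance between the centers). Now fix a non-dyadic $p\in\R$ — the dyadic case being already done, and $\partial f$ mapping $\R\setminus\mathcal D$ to itself since it fixes $\mathcal D$ — and for each $k$ let $E_{p,k}$ denote the horocycle centered at $p$ of Euclidean diameter $2^{-k}$. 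Since $E_{p,k}$ is tangent to the (fixed) horizontal line of height $2^{-k}$, its image $f(E_{p,k})$ is a horocycle centered at $\partial f(p)$ that is again tangent to that line, hence again of Euclidean diameter $2^{-k}$. By the trichotomy above, $E_{p,k}$ meets $h_n^k$ in two points exactly when $|p-x_n^k|<2^{-k}$; as $\mathcal D_k$ has uniform spacing $2^{-k}$, this happens for precisely the two indices $n$ with $x_n^k<p<x_{n+1}^k$, and the analogous statement holds for $f(E_{p,k})$ with $\partial f(p)$ in place of $p$. Since $f$ fixes every $h_n^k$ and preserves intersection patterns, these two index sets coincide, which forces $\partial f(p)$ to lie in the same level-$k$ dyadic interval as $p$. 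These intervals are nested with lengths tending to $0$, so $\partial f(p)=p$; together with $\partial f|_{\mathcal D}=\mathrm{Id}$ and $\partial f(\infty)=\infty$ this yields $\partial f=\mathrm{Id}_{\partial\h}$.

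Once $\partial f=\mathrm{Id}$ is known, fixing $h_i(\infty)$ pointwise is immediate: for an arbitrary $p+i\in h_i(\infty)$, the horocycle $h_{p+i}(p)$ is tangent to $h_i(\infty)$ at $p+i$, so $f(h_{p+i}(p))$ is tangent to $f(h_i(\infty))=h_i(\infty)$; being centered at $\partial f(p)=p$ it must coincide with $h_{p+i}(p)$, and $f$ carries the point of tangency to the point of tangency, so $f(p+i)=p+i$. This finishes the argument with $j$ as chosen in the first paragraph.

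The hard part is the second step. What \Cref{2} provides is only the identity of $f$ along a countable scaffold, so some genuinely quantitative input is needed to promote this to rigidity at an arbitrary boundary point; here that input is the intersection trichotomy for equal-diameter horocycles, used in tandem with the fixed horizontal lines to guarantee that $f(E_{p,k})$ again has diameter exactly $2^{-k}$. The only point requiring care is the bookkeeping that recovers, from the intersection pattern of $E_{p,k}$ with the $h_n^k$, which level-$k$ dyadic interval contains $p$; everything else is routine.
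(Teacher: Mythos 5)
Your proposal is correct, and it follows the paper's overall skeleton (reduce via the two preceding lemmas, then treat non-dyadic boundary points, then the line $L$), but the crucial middle step is carried out by a genuinely different mechanism. For a non-dyadic $x$, the paper stays at a single scale: it takes the unit-diameter horocycle $h_{x+i}(x)$ and ``pinches'' its image using disjointness from the fixed unit horocycles centered at dyadics at distance more than $1$ from $x$ (the family $\Ho_A$), letting density of $\mathcal D$ force $f(x)=x$ in one stroke. You instead run a multi-scale localization: tangency to the fixed horizontal lines at heights $2^{-k}$ pins the Euclidean diameter of $f(E_{p,k})$ to exactly $2^{-k}$, and the two-point intersection pattern with the fixed level-$k$ family $h_n^k$ (via the trichotomy $(x-y)^2\lessgtr d_xd_y$) traps $\partial f(p)$ in the same level-$k$ dyadic interval as $p$; nesting then gives $\partial f(p)=p$. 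Your version is more quantitative and arguably more self-contained than the paper's rather terse ``pinched between elements of $\Ho_A$,'' at the cost of invoking facts (the setwise fixedness of the $h_n^k$ for $k\ge 1$ and of the lines $L_k$) that appear only inside the \emph{proof} of \Cref{2} rather than in its statement --- you should either cite those explicitly or re-derive them in a line (the points $z_n^k$ are tangency points of fixed horocycles, hence fixed, and three fixed points determine the horocycle $L_k$). The final step, recovering $f(p+i)=p+i$ from the tangency of $h_{p+i}(p)$ with $L$, coincides with the paper's argument.
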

 \begin{proof} \Cref{2} implies that there exist $j$ such that $j\circ f(x)=x$ for all $x\in\mathcal{D}$. Now, assume that $x\in\R-\mathcal{D}$. Set $A:=\mathcal{D}\cap((-\infty,x-1)\cup(x+1,\infty))$ and $\Ho_{A}$ denotes the set of all horocycles centered at a point in $A$ and tangent to $h_i(\infty)$.
 By \Cref{2}, for all $h\in\Ho_{A}$, $h$ is a fixed set of $j\circ f$. So, $j\circ f(h_{x+i}(x))=h_{x+i}(x)$ since $h_{x+i}(x)$ is pinched between elements of $\Ho_{A}$. Hence, $j\circ f(x)=x$. 
 
 On the other side, we already know that $L$ is fixed setwise by $j\circ f$. Let $z=x+i$ be a point in ~$L$. We have $j\circ f(x)=x$, $h_z(x)=f(h_z(x))=h_{f(z)}(x)$ and $f(z)\in f(L)=L$. It implies that $j\circ f(z)\in L\cap h_z(x)=\{z\}$ and this achieve the proof.
 \end{proof}
 Now, we give the proof of \Cref{main}.
 \begin{figure}[htbp]
\begin{center}
\includegraphics[scale=0.4]{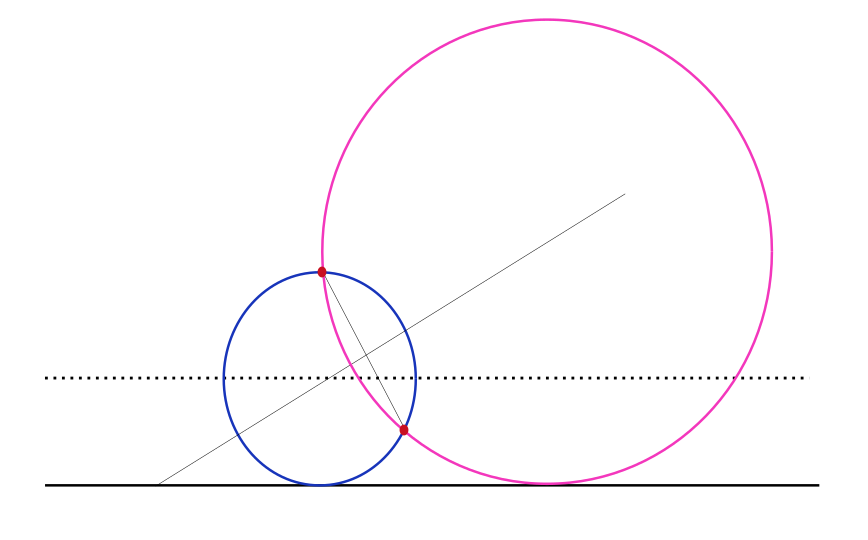}
\put(-102,20){\small{$z'$}}
\put(-118,55){\small{$z$}}
\put(-139,35){\small{$z_0$}}
\put(-88,35){\small{$z_1$}}
\caption{}
\label{default}
\end{center}
\end{figure}
\begin{proof}[\textbf{Proof of \Cref{main}}]
Let $f:\h\rightarrow\h$ be a bijection which sends horocycles to horocycles. By \Cref{projet} $f$ extends to $\partial\h$ and by \Cref{ctr} there exists $j\in\mathrm{Isom}(\h)$ such that $j\circ f$ fixes $\R$ and $L:=\{z\in\h, \mathrm{Im}(z)=~1\}$.

 Let $z=x+iy\in\h$ such that $y>1$ (the case $y<1$ follows the same idea). Then $h_z(x)\cap L:=\{z_0,z_1\}$. So $z_0$, $z_1$ and $x$ are fixed point of $j\circ f$. Hence, $h_z(x)$ is a fixed set since a horocycle is completely determined by three points. Let $z'\in h_z(x)$ such that $\mathrm{Im}(z')<1$ and $h_{z,z'}$ the horocylce passing through $z$ and $z'$: $j\circ f(h_{z,z'})=h_{z,z'}$. It follows $j\circ f(z)\in h_z(x)\cap h_{z,z'}=\{z,z'\}$. If $j\circ f(z)=z'$ then the order on the horocycles $h_z(\infty)$ and $h_i(\infty)$ is not preserved under $j\circ f$ which is absurd. So, $j\circ f(z)=z$. Therefore, $j\circ f=\mathrm{id}$ which achieve the proof. 
\end{proof}
\end{paragraph}

\section{Proof of \Cref{main2}}
Hypercylces intersect in four different ways and we first show that they are all preserved by $f$. This implies that hypercycles with same endpoints are mapped to hypercycles with the same property. Finally, we show that a bijection that sends hypercycles to hypercycles also sends geodesics to geodesics which achieves the proof by Jeffers theorem. 

A hypercycle  is an arc of circle (with endpoints on $ \partial\h $) not orthogonal to $\partial \mathbb{H}^2$ or a line in $\mathbb{H}^2$ that is neither vertical nor horizontal. Unlike geodesics, two hypercycles $h_1$ and $h_2$ may intersect in four different types:
\begin{itemize}
\item \textit{Type 1}: $h_1$ and $h_2$ intersect at one point and they are tangent;
\item \textit{Type 2:} $h_1$ and $h_2$ intersect at one point with one endpoint in common; 
\item \textit{Type 3:} $h_1$ and $h_2$ intersect at one point and have different endpoints;
\item \textit{Type 4:} $h_1$ and $h_2$ intersect at two points.  
\end{itemize} 
\begin{figure}[htbp]
\begin{center}
\includegraphics[scale=0.25]{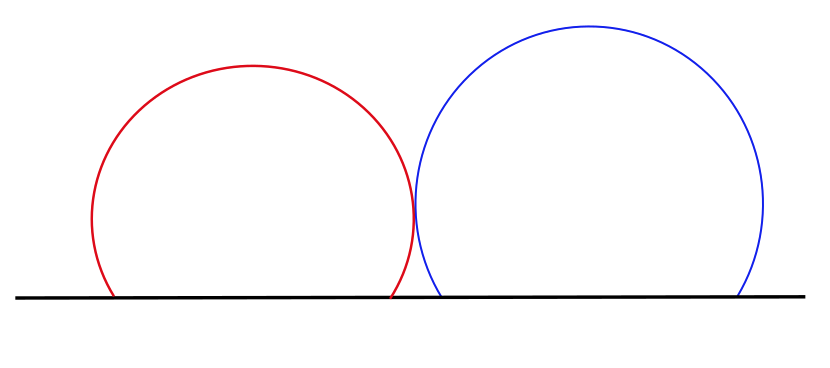}\put(-60,0){\tiny{\textbf{Type 1}}}\hspace{2cm}\includegraphics[scale=0.25]{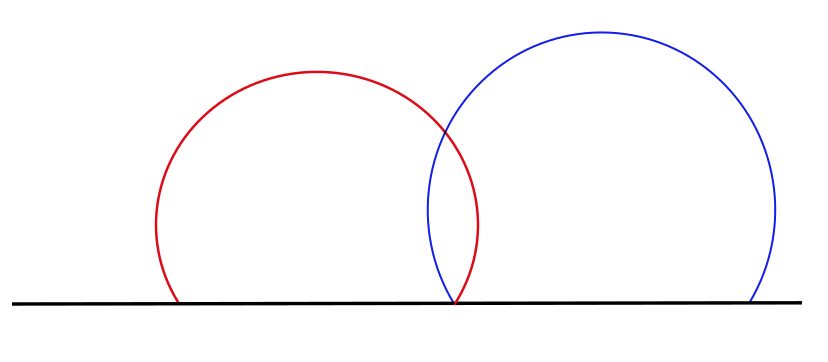}\put(-60,0){\tiny{\textbf{Type 2}}}\hspace{4cm}\\\includegraphics[scale=0.25]{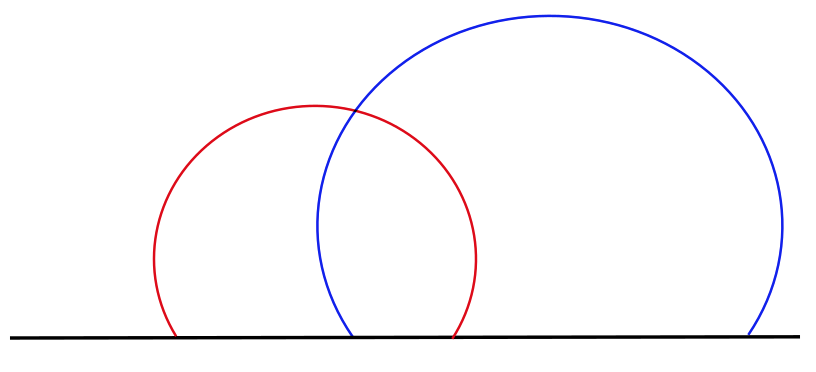}\put(-60,-3){\tiny{\textbf{Type 3}}}\hspace{2cm}\includegraphics[scale=0.25]{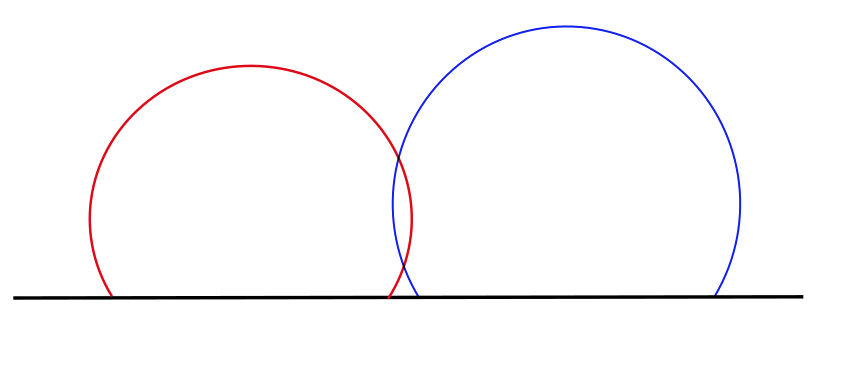}\put(-60,0){\tiny{\textbf{Type 4}}}
\caption{Different types of intersection between hypercycles}
\label{default}
\end{center}
\end{figure}
For the reminder of this section, we set $f$ to be a bijection that sends hypercycles to hypercylces. It is not hard to see that $f$ preserves intersections of type 4; that is if two hypercycles intersect twice so will be their images under $f$. We first start by showing that the other intersection types are also preserved. 

Let $h_1$ and $h_2$ be two hypercycles tangent at a point $p$. The point $p$ divided $h_1$ (respectively $h_2$) into two sub-arcs $I_1$ and $I_2$ (respectively $J_1$ and ~$J_2$). 
\begin{lemma}\label{hyp1} For all $x$ in $I_1$ and for all $y$ in $I_2$ there exists a hypercycle $h$ passing through $x$ and $y$ and disjoint to $h_2$. Moreover, this property holds only for pairs of hypercycles with intersection of Type 1.   
\end{lemma}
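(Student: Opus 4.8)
The plan is to prove the two assertions separately, using throughout that a hypercycle is precisely the $\h$-part of a Euclidean circle (or line) meeting $\partial\h$ in two points without being orthogonal to it, and that two distinct circles tangent at a point $p$ meet only at $p$. For a Type~1 pair $(h_1,h_2)$ the latter gives the three facts I will use: $h_1\setminus\{p\}$ is disjoint from $h_2$; near $p$ the curve $h_1$ lies on one side of $h_2$; and $\overline{h_1}$ and $\overline{h_2}$ have no common point on $\partial\h$.

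For the existence statement, fix $x\in I_1$ and $y\in I_2$, so $x,y\neq p$. After applying an isometry of $\h$ to the whole configuration — legitimate since isometries preserve hypercycles, tangency and disjointness — I may assume $\overline{h_1}$ avoids $\infty$, i.e.\ $h_1$ is the $\h$-part of a genuine Euclidean circle $C_0$. I then work inside the pencil $\mathcal P$ of Euclidean circles through $x$ and $y$, of which $C_0$ is a member. Being a hypercycle is an open condition, so the circles $C_t\in\mathcal P$ close to $C_0$ cut out hypercycles $h^t$ through $x$ and $y$ with $h^0=h_1$. Since two distinct members of $\mathcal P$ meet only at $x$ and $y$, the first-order variation of $C_t$ along the sub-arc of $C_0=h_1$ running from $x$ through $p$ to $y$ is nowhere zero on the open sub-arc and has constant sign there; and because $h_1$ is \emph{tangent} to $h_2$ at $p$ (so locally on one side of it), exactly one sign of $t$ moves that sub-arc, near $p$, strictly onto the side of $h_2$ containing the rest of $h_1$.

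Next I would check that for $t$ small of that sign the hypercycle $h:=h^t$ is disjoint from $h_2$. Near $p$: writing the relevant arcs as graphs over the common tangent line at $p$ and orienting so that $h_1-h_2\geq 0$, one has $h^t-h_2=(h_1-h_2)+t\psi+o(t)$ with $t\psi>0$ on a small Euclidean ball about $p$ (by the choice of sign of $t$), hence $h^t-h_2>0$ there for $t$ small. Away from $p$: $\overline{h_1}$ minus a neighbourhood of $p$ is a compact subset of $\overline{\h}$ disjoint from $\overline{h_2}$, and the normalization to a circle makes $C_t\to C_0$ uniformly on $\overline{\h}$, so for $t$ small $h^t$ avoids $\overline{h_2}$ off that ball too. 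Then $h$ is the hypercycle required. For the ``moreover'' part I rule out the other configurations: a Type~4 pair meets in two points, so there is no single point $p$ splitting $h_1$, and the statement does not apply; and in Types~2 and~3 the curve $h_1$ crosses $h_2$ transversally at the unique point $p$, so $I_1$ and $I_2$ lie in \emph{different} components of $\h\setminus h_2$, whence every hypercycle through a point of $I_1$ and a point of $I_2$ contains an arc joining them inside $\h$ and is forced to cross $h_2$. So the property fails for every non-tangential pair and holds only for Type~1.

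The step I expect to be the real obstacle is the existence half: one must ensure simultaneously that the perturbation of $h_1$ inside the pencil through $x$ and $y$ stays a hypercycle, genuinely detaches from $h_2$ at the tangency point — which is the only place where ``tangent, not transverse'' enters — and does not create a new intersection with $h_2$ far out near $\partial\h$, the last point being exactly why one first normalizes $h_1$ to a circular arc rather than a Euclidean line. The reverse implication is, by contrast, a soft separation argument via the Jordan curve theorem.
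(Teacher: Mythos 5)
Your proof is correct and follows essentially the same route as the paper: the paper also produces the disjoint hypercycle by moving the centre of the circle through $x$ and $y$ along the perpendicular bisector of $[x,y]$ (i.e.\ perturbing within the same pencil of circles you use, so that the arc detaches from the tangency point $p$), and it also handles the ``moreover'' part by noting that in Types 2 and 3 the arc $h_1$ crosses $h_2$ transversally, so $I_1$ and $I_2$ lie in different components of $\h\setminus h_2$ and any connecting hypercycle must meet $h_2$. Your write-up is simply more careful about the sign of the perturbation, the normalization ruling out the line case, and the compactness argument away from $p$.
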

\begin{figure}[htbp]
\begin{center}
\includegraphics[scale=0.34]{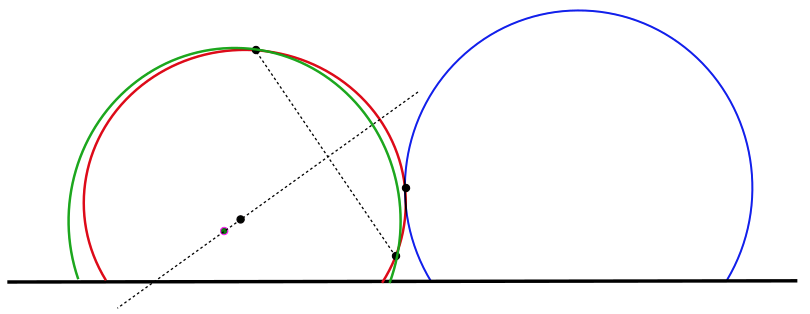}
\put(-67,23){\tiny{$p$}}
\put(-77,10){\tiny{$x$}}
\put(-98,48){\tiny{$y$}}
\put(-102,17){\tiny{$O$}}
\caption{}
\label{var1}
\end{center}
\end{figure}

\begin{proof}
Let $x$ and $y$ be two points in $I_1$ and $I_2$ respectively. Set $O$ to be the Euclidean center of the hypercycle $h_1$, it lies on the euclidean bisector of the segment $[x,y]$. Let $O'$ be a point in that bisector sufficiently close to $O$ such that its (Euclidean) distance to $[x ,y]$ is greater than the distance between $O$ and $[x, y]$. The circle centered at $O'$ passing through $x$ and $y$ defined a hypercycle passing through $x$ and $y$ and disjoint from $h_2$, since $p$ is a tangent point (see ~\Cref{var1}).   

In Type 2 and Type 3 configurations, $h_2$ divided $\mathbb{H}^2$ into two components. Moreover, $I_1$ and $I_2$ are in different components. So, every hypercycle $h$ from $x\in I_1$ to $y\in I_2$ crosses $h_2$.    
\end{proof}
The following is a direct consequence of \Cref{hyp1}:
\begin{corollary} The hypercycles $h_1$ and $h_2$ are tangent if and only if $f(h_1)$ and $f(h_{2})$ are. 
\end{corollary}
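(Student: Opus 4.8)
The plan is to observe that tangency of two hypercycles is an \emph{incidence-theoretic} property — one expressible purely in terms of hypercycles, the points lying on them, and disjointness — and that $f$ preserves all of these. Since $f$ is a bijection of $\h$ carrying hypercycles to hypercycles, injectivity gives $f(g_1)\cap f(g_2)=f(g_1\cap g_2)$ for any two hypercycles $g_1,g_2$, so $|f(g_1)\cap f(g_2)|=|g_1\cap g_2|$; in particular $f$ sends disjoint hypercycles to disjoint hypercycles and one-point intersections to one-point intersections.

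Next I would record the following reformulation of \Cref{hyp1}: two hypercycles $h_1,h_2$ meeting in a single point $p$ are tangent \emph{if and only if} for every pair of points $x,y\in h_1\setminus\{p\}$ there is a hypercycle through $x$ and $y$ disjoint from $h_2$. The ``only if'' direction is \Cref{hyp1} itself when $x,y$ lie in the two different arcs $I_1,I_2$ of $h_1\setminus\{p\}$; when $x,y$ lie in the same arc, the compact sub-arc of $h_1$ joining them stays at positive distance from $h_2$, so a circular arc through $x$ and $y$ close to it and bulging slightly away from $h_2$ is the desired hypercycle. The ``if'' direction is exactly the ``moreover'' clause of \Cref{hyp1}: in a Type~$2$ or Type~$3$ configuration $p$ is a transverse crossing, so $h_2$ separates $\h$ with $I_1$ on one side and $I_2$ on the other, and then \emph{no} connected subset of $\h$ containing a point of $I_1$ and a point of $I_2$ can avoid $h_2$.

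The corollary now follows: if $h_1,h_2$ are tangent at $p$, then $f(h_1)\cap f(h_2)=\{f(p)\}$, and $f(p)$ splits $f(h_1)$ into two arcs $I_1',I_2'$; given $x'\in I_1'$ and $y'\in I_2'$, the points $f^{-1}(x'),f^{-1}(y')$ lie in $h_1\setminus\{p\}$, so the reformulation produces a hypercycle through them disjoint from $h_2$, whose $f$-image is a hypercycle through $x'$ and $y'$ disjoint from $f(h_2)$; by the reformulation applied to $f(h_1),f(h_2)$, these are tangent. The reverse implication is the same argument run through $f^{-1}$. I expect the delicate points to be the ``same arc'' half of the reformulation, and — for the reverse implication — making sure $f^{-1}$ is still hypercycle-preserving (equivalently, that $f$ induces a bijection of the set of hypercycles), so that the existential quantifier ``there is a hypercycle through $x$ and $y$'' is transported faithfully; once everything is phrased in incidence terms it passes across $f$ unchanged.
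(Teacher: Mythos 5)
Your argument is correct and is essentially the unpacking the paper intends when it calls this corollary a ``direct consequence'' of \Cref{hyp1}: tangency (among pairs of hypercycles meeting in one point) is characterized by an incidence property, and a hypercycle-preserving bijection transports incidence properties. Your one genuine addition is the strengthening of the characterization to quantify over \emph{all} pairs $x,y\in h_1\setminus\{p\}$ rather than only pairs lying in the two different arcs $I_1,I_2$; this is a worthwhile refinement, because a bare bijection is not known a priori to carry the arc decomposition of $h_1\setminus\{p\}$ onto that of $f(h_1)\setminus\{f(p)\}$, so the lemma as literally stated does not transport across $f$ without it. Two caveats, both at the same level of rigor as the paper's own proof of \Cref{hyp1}: in the same-arc case your perturbed circular arc must avoid $h_2$ along its entire extent out to $\partial\h$, not merely near the sub-arc of $h_1$ joining $x$ and $y$ (this does work, since hypercycles tangent at an interior point can share no endpoint, so away from $p$ the closures of $h_1$ and $h_2$ in $\h\cup\partial\h$ are at positive distance and a sufficiently small perturbation pushed toward the correct side at $p$ stays disjoint from $h_2$); and the reverse implication genuinely requires that $f^{-1}$ also send hypercycles to hypercycles, a fact you rightly flag and which the paper itself uses without comment, for instance in the proof of \Cref{hyp2}.
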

\begin{figure}[htbp]
\begin{center}
\includegraphics[scale=0.3]{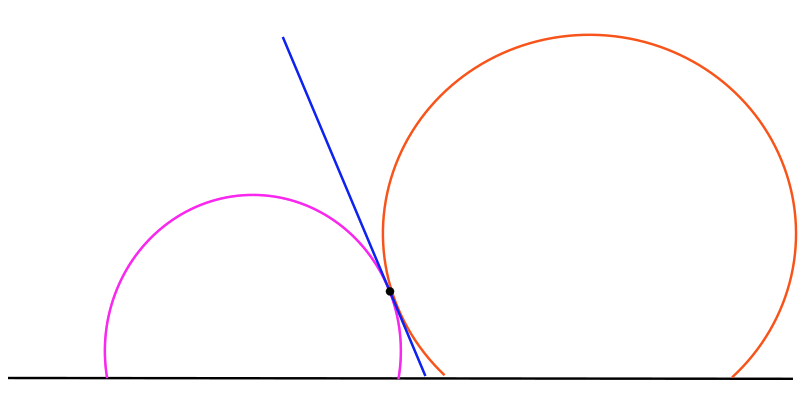}
\put(-100,32){\tiny{$h_1$}}
\put(-85,45){\tiny{$h_2$}}
\put(-30,58){\tiny{$h_3$}}
\caption{Three hypercycles $h_1$, $h_2$ and $h_3$ tangent at $p$ with $h_2$ between $h_1$ and $h_3$}
\label{default}
\end{center}
\end{figure}
We describe a special intersection pattern between three horocycles. 
\begin{definition}
Let $h_1$, $h_2$ and $h_3$ be three hypercycles tangent at a point $p$. We say that $h_2$ is \textit{between} of $h_1$ and $h_3$ if turning around $p$ on a small circle starting at $h_1$ gives $h_1-h_2-h_3-h_3-h_2-h_1$ as crossing pattern. Then, we say that $h_3$ is \textit{below} $h_2$ relatively to $h_1$. 
\end{definition}
\begin{lemma}
If $h_2$ is between of $h_1$ and $h_3$, then $f(h_2)$ is between $f(h_1)$ and $f(h_3)$. 
\end{lemma}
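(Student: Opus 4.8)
The plan is to show that the "between" relation for three mutually tangent hypercycles is detected by a purely incidence-theoretic condition, one phrased entirely in terms of tangency and disjointness, so that it is automatically preserved by $f$. Observe that if $h_2$ is between $h_1$ and $h_3$ at the common tangency point $p$, then the crossing pattern $h_1-h_2-h_3-h_3-h_2-h_1$ around $p$ says precisely that $h_2$ locally separates $h_1$ from $h_3$ near $p$: on a small circle around $p$, the two arcs of $h_2$ cut the circle into two sectors, one containing both local arcs of $h_1$ and the other containing both local arcs of $h_3$. I would first make this separation statement global by noting, as in the proof of \Cref{hyp1}, that a hypercycle tangent to $h_2$ at $p$ and lying (near $p$) in the $h_1$-sector cannot cross $h_2$ again, hence lies entirely on the $h_1$-side of $h_2$; so $h_1$ and $h_3$ lie in the two distinct components of $\h \setminus h_2$.

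Next I would convert "lying on opposite sides of $h_2$" into an incidence condition. The natural criterion is: $h_2$ is between $h_1$ and $h_3$ if and only if $h_1$, $h_2$, $h_3$ are pairwise tangent at a common point and every hypercycle tangent to $h_2$ at that point is disjoint from exactly one of $h_1, h_3$ (equivalently, there is no hypercycle through the point, tangent to none of them, ... — here I would actually use a cleaner formulation). More precisely: among three hypercycles pairwise tangent at $p$, exactly one of them is "between" the other two, and that one is characterized as the unique $h_i$ such that $h_j$ and $h_k$ are in different components of $\h\setminus h_i$; the other two both fail this, since removing $h_1$ leaves $h_2$ and $h_3$ on the same side, and likewise for $h_3$. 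The side-relation "$a$ and $b$ lie in different components of $\h \setminus c$" can in turn be recast, following the intersection-based reformulation used for the partial order on $\Ho$ in \Cref{jet}, as: every hypercycle meeting both $a$ and $b$ must meet $c$. That last condition is stated purely using the intersection pattern of hypercycles, which $f$ preserves (using the already-established fact that tangency and each of Types 2, 3, 4 are preserved, so "disjoint" is preserved and "meets" is preserved).

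With that dictionary in place the proof is short: suppose $h_2$ is between $h_1$ and $h_3$. By the above, every hypercycle meeting both $h_1$ and $h_3$ meets $h_2$, while there do exist hypercycles meeting $h_1$ and $h_2$ but not $h_3$ (and symmetrically for $h_3$) — e.g. a small hypercycle through a point of $I_1\subset h_1$ and a nearby point of one arc of $h_2$, pushed to the $h_1$-side. Applying $f^{-1}$ to an arbitrary hypercycle meeting both $f(h_1)$ and $f(h_3)$ and using that $f^{-1}$ preserves "meets", we get that it meets $f(h_2)$ as well; and the asymmetry rules out $f(h_1)$ or $f(h_3)$ being the middle one. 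Since $f(h_1), f(h_2), f(h_3)$ are still pairwise tangent at the common point $f(p)$ (by the corollary above), exactly one of them is between the other two, and the separation condition forces it to be $f(h_2)$. Hence $f(h_2)$ is between $f(h_1)$ and $f(h_3)$.

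The main obstacle I anticipate is the first paragraph: making rigorous the passage from the local crossing pattern around $p$ to the global statement that $h_1$ and $h_3$ lie in different components of $\h \setminus h_2$, and conversely that the "wrong" two hypercycles do not separate. This is a planar-topology argument about nested arcs of circles tangent at a point, and one must be careful that a hypercycle tangent to $h_2$ at $p$ and starting on the $h_1$-side genuinely cannot escape — this is exactly the Type-1 disjointness phenomenon from \Cref{hyp1}, so it should go through, but it is the step where the geometry (as opposed to formal incidence bookkeeping) actually enters. A secondary nuisance is checking that the incidence reformulation of "different components of $\h\setminus c$" is an \emph{equivalence}, i.e. also that two hypercycles on the \emph{same} side of $c$ admit a common transversal disjoint from $c$; this again follows by the construction in \Cref{hyp1} applied on one side.
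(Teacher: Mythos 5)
Your proposal is correct and follows essentially the same route as the paper: the paper likewise observes that $h_2$ being between $h_1$ and $h_3$ means $h_2$ separates them, so every hypercycle meeting both $h_1$ and $h_3$ meets $h_2$, then uses preservation of tangency to get $f(h_1),f(h_2),f(h_3)$ tangent at $f(p)$ and pulls back a hypothetical hypercycle meeting $f(h_1)$ and $f(h_3)$ but missing $f(h_2)$ to reach a contradiction. The extra care you take with the local-to-global separation step and with the converse of the incidence characterization is exactly the part the paper leaves implicit, so your version is, if anything, slightly more complete.
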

\begin{proof} Assume that $h_2$ is between $h_1$ and $h_3$. Then $h_2$ separates $h_1$ and $h_3$ and it follows that every hypercycle that intersect both $h_1$ and $h_3$ intersect $h_2$. Since $f$ preserves tangency,  $f(h_1)$, $f(h_2)$ and $f(h_3)$ are tangent at $f(p)$. If $f(h_2)$ is not between $f(h_1)$ and $f(h_3)$, then there exists a hypercycle $h$ that intersects $f(h_1)$ and $f(h_3)$ without intersecting $f(h_2)$. Therefore, $f^{-1}(h)$ is a hypercycle that intersects $h_1$ and $h_3$ and disjoint from $h_2$; which is a contradiction.  So,  $f(h_2)$ is between $f(h_1)$ and $f(h_3)$. 
\end{proof}

\begin{lemma} Let $h_1$ and $h_2$ be two hypercycles that intersect at a point of Type 2. Then, there exists a hypercycle $h$ tangent to $h_1$ at $p$ such that every hypercycle $h'$ below $h_1$ relatively to $h$ intersects $h_2$ twice. Moreover, this property does not occur for intersections of Type 3.   
\end{lemma}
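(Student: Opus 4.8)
The plan is to reduce, by an isometry, to an explicit picture in the upper half-plane and then study the pencil of Euclidean circles tangent at $p$ to the Euclidean cline underlying $h_1$. Since isometries of $\h$ preserve hypercycles, tangency, the ``between'' relation and the splitting $\overline{\h}=\h\cup\partial\h$, I may first apply an isometry taking the common endpoint of $h_1$ and $h_2$ to $\infty$; then $h_1=\ell_1\cap\h$ and $h_2=\ell_2\cap\h$ for two non-vertical, non-horizontal Euclidean lines $\ell_1,\ell_2$ through $p$, with distinct finite endpoints $x_1\neq x_2$ on $\R$ (distinctness holds because two distinct hypercycles with the same pair of endpoints are disjoint in $\h$, while these two meet at $p$). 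A hypercycle tangent to $h_1$ at $p$ and different from $h_1$ is exactly the trace in $\h$ of a Euclidean circle $\Gamma$ tangent to $\ell_1$ at $p$, and such a $\Gamma$ lies wholly on one side of $\ell_1$. The first thing I would verify, by reading off the small-circle crossing pattern in the definition of ``between'' for three mutually tangent curves whose common tangent is the line $\ell_1$, is that ``$h'$ is below $h_1$ relatively to $h$'' holds exactly when $h$ and $h'$ are traces of circles tangent to $\ell_1$ at $p$ lying on opposite sides of $\ell_1$. Thus $h$ only picks a side of $\ell_1$, and the claim to be proved becomes: one side $S$ of $\ell_1$ has the property that every hypercycle which is the trace of a circle tangent to $\ell_1$ at $p$ on side $S$ meets $h_2$ twice.

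To establish this, observe that a circle $\Gamma$ tangent to $\ell_1$ at $p$ is transverse to $\ell_2$ at $p$ (as $\ell_1\neq\ell_2$), so $\Gamma$ meets $\ell_2$ at $p$ and at exactly one further point $q=q(\Gamma)$; writing $h'=\Gamma\cap\h$ one has $h'\cap h_2=\{p,q\}$ if $q\in\h$ and $h'\cap h_2=\{p\}$ otherwise. Moreover $q\in\h$ precisely when the chord $[p,q]$ of $\Gamma$ cut out by $\ell_2$ stays in $\h$, which happens precisely when $x_2\notin[p,q]$, i.e.\ when $x_2$ lies outside the closed disk bounded by $\Gamma$. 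Now let $S$ be the side of $\ell_1$ not containing $x_2$; this is well defined since $x_1\neq x_2$ gives $x_2\notin\ell_1$. Any circle $\Gamma$ tangent to $\ell_1$ at $p$ on side $S$, together with its disk, is contained in the closed half-plane bounded by $\ell_1$ on side $S$, which does not contain $x_2$; hence $x_2$ is outside that closed disk, so $q(\Gamma)\in\h$ and $h'$ meets $h_2$ twice. Finally I would take $h$ to be a hypercycle that is the trace of a circle tangent to $\ell_1$ at $p$ on the opposite side of $\ell_1$ (a circle of large radius will do, its trace being a hypercycle because it is $C^1$-close to the non-vertical line $\ell_1$); then every hypercycle $h'$ below $h_1$ relatively to $h$ lies on side $S$ and hence meets $h_2$ twice.

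For the ``moreover'' part I would run the same pencil argument in the Type 3 case, now applying an isometry sending one endpoint of $h_1$ to $\infty$, so $h_1=\ell_1\cap\h$ for a non-vertical non-horizontal line $\ell_1$ and $h_2=C_2\cap\h$ for a genuine Euclidean circle $C_2$ (both of whose endpoints are finite and distinct from those of $h_1$), with $\ell_1$ transverse to $C_2$ at $p$. Then $\ell_1$ meets $C_2$ at $p$ and at one other point $p'$, and Type 3 forces $\im(p')<0$: $p'\notin\h$ (else $h_1$ and $h_2$ would meet at the two points $p,p'$, which is Type 4) and $p'\notin\R$ (else the finite endpoint of $h_1$ would be an endpoint of $h_2$, contradicting ``different endpoints''). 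For $\Gamma$ tangent to $\ell_1$ at $p$, again $\Gamma$ meets $C_2$ at $p$ and one further point $q(\Gamma)$, and $\Gamma\cap\h$ meets $h_2$ twice iff $q(\Gamma)\in\h$. Letting the radius of $\Gamma$ go to $\infty$ along either side of $\ell_1$, the circles converge locally uniformly to $\ell_1$, and since each crosses $C_2$ transversally at $p$ at the fixed nonzero angle between $\ell_1$ and $C_2$, while $C_2$ is bounded, the second intersection satisfies $q(\Gamma)\to p'$. Hence for a sufficiently large such $\Gamma$ (on either side) $q(\Gamma)$ lies near $p'$, so $q(\Gamma)\notin\h$, while $\Gamma\cap\h$ is itself a hypercycle (being $C^1$-close to the non-vertical line $\ell_1$, $\Gamma$ meets $\R$ transversally and non-orthogonally). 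Such a hypercycle meets $h_2$ only at $p$. Since this is available on both sides of $\ell_1$, for any hypercycle $h$ tangent to $h_1$ at $p$ one obtains, on the side of $\ell_1$ opposite to $h$, a hypercycle $h'$ below $h_1$ relatively to $h$ that meets $h_2$ only once; so no $h$ has the asserted property.

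The step I expect to be the main obstacle is the translation of ``$h'$ below $h_1$ relatively to $h$'' into ``$h'$ on the side of $\ell_1$ opposite to $h$'': this requires unwinding the cyclic crossing pattern around $p$ on a small circle for three mutually tangent curves when the middle one is a straight line, and the orientation bookkeeping there is easy to get wrong. The two analytic inputs — that a circle tangent to $\ell_1$ at $p$ and its disk lie in one closed half-plane bounded by $\ell_1$, and that the second intersection $q(\Gamma)$ depends continuously on $\Gamma$ and tends to $p'$ as the radius grows — are elementary, the latter being a routine transversality-and-compactness argument.
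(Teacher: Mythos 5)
Your proposal is correct, and its overall strategy matches the paper's: single out the two sides of $h_1$ at $p$, show that in Type 2 every tangent hypercycle on the side away from the common endpoint meets $h_2$ twice, and that in Type 3 both sides carry tangent hypercycles meeting $h_2$ only once. The mechanisms differ, however. The paper does not normalize: it observes that a hypercycle $h'$ below $h_1$ relatively to $h$ has both endpoints in the boundary arc $[x_1,x_2]$ avoiding the endpoints of $h$, so the endpoint pairs of $h'$ and $h_2$ are unlinked; together with the transverse intersection at $p$, a parity (Jordan curve) argument then forces a second intersection point. You instead send the common endpoint to $\infty$, turn $h_1$ and $h_2$ into Euclidean lines through $p$, and track the second Euclidean intersection $q(\Gamma)$ of each tangent circle $\Gamma$ with $\ell_2$ directly, using the fact that a circle tangent to a line lies, together with its disk, in one closed half-plane. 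What your route buys is an explicit treatment of two points the paper leaves implicit: the translation of ``below $h_1$ relatively to $h$'' into ``on the side of $\ell_1$ opposite to $h$'' (your reading of the cyclic crossing pattern around $p$ is the correct one), and the Type 3 case, where the paper's ``one can see'' is replaced by your limiting argument $q(\Gamma)\to p'$ with $\mathrm{Im}(p')<0$, which genuinely needs the transversality-at-$p$ step to rule out $q(\Gamma)\to p$. The one caveat --- that not every circle of the tangent pencil traces a hypercycle, since small ones give hyperbolic circles or horocycles --- is handled correctly: you only need the large-radius members for the existence claims, and in the Type 2 direction the quantifier ranges over hypercycles $h'$ anyway.
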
   
\begin{figure}[htbp]
\begin{center}
\includegraphics[scale=0.4]{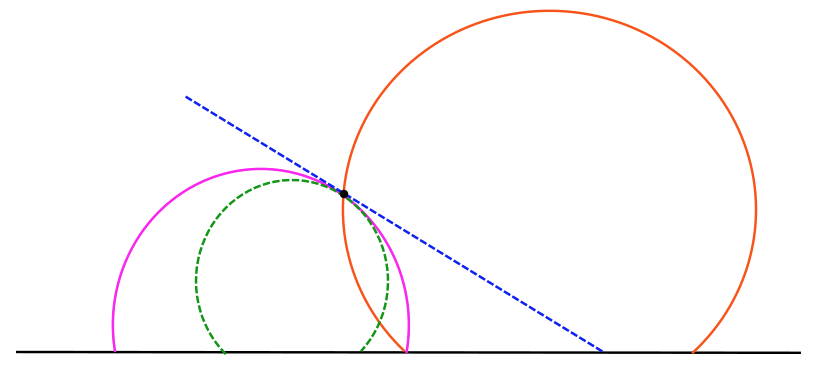}
\put(-143,0){\tiny{$x_1$}}
\put(-95,0){\tiny{$x_2=y_1$}}
\put(-27,0){\tiny{$y_2$}}
\put(-143,30){\tiny{$h_1$}}
\put(-10,30){\tiny{$h_2$}}
\put(-120,55){\tiny{$h$}}
\put(-120,25){\tiny{$h'$}}
\caption{}
\label{4tang}
\end{center}
\end{figure}
\begin{proof}
Let $h_1$ and $h_2$ be two hypercycles that intersect at point $p$ of Type ~2. Let $(x_1, x_2)$ and $(y_1, y_2)$ be the endpoints of $h_1$ and $h_2$ respectively; with $x_2=y_1$ (since $h_1$ and $h_2$ have one endpoint in common). Then, let $h$ be a hypercycle tangent to $h_1$ at $p$ and intersecting $h_2$ transversally at $p$ (see  \Cref{4tang}).

Then, if $h'$ is below $h_1$ relatively to $h$ its endpoints $(t_1, t_2)$ are in the interval $[x_1, x_2]$ that does not contained the endpoints of $h$. It follows that $(t_1, t_2)$ and $(y_1, y_2)$ are not intertwined and $h'$ intersects $h_2$. So, they intersect twice.

If $h_1$ and $h_2$ intersect once on Type 3 point, one can see that for every $h$ tangent to $h_1$, there exists $h'$ below $h_1$ relatively to $h$ that intersect $h_2$ at a transverse point; this come essentially from the fact that $h_1$ and $h_2$ have no common endpoints (see \Cref{5tang}).   
\begin{figure}[htbp]
\begin{center}
\includegraphics[scale=0.4]{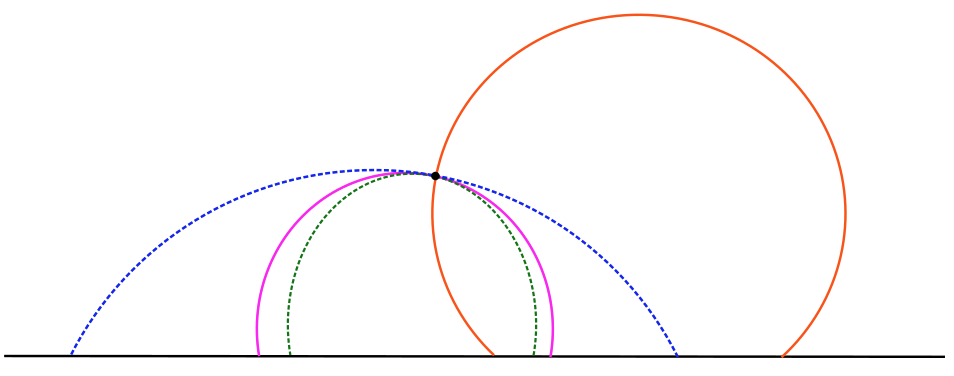}
\put(-20,30){\tiny{$h_2$}}
\put(-147,20){\tiny{$h_1$}}
\put(-131,13){\tiny{$h'$}}
\put(-172,20){\tiny{$h$}}
\caption{}
\label{5tang}
\end{center}
\end{figure}

\end{proof}
\begin{corollary} If $h_1$ and $h_2$ intersect at a point of Type 2 (respectively Type ~3), so are $f(h_1)$ and $f(h_2)$. 
\end{corollary}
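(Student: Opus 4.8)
The plan is to deduce this corollary from the two facts just established — that $f$ (and likewise $f^{-1}$) preserves tangency, i.e.\ Type 1, by the corollary following \Cref{hyp1}, and preserves two–point intersections, i.e.\ Type 4 — together with the separating criterion of the preceding lemma. First I would check that $f$ maps the set $\{$Type 2, Type 3$\}$ into itself: if $h_1$ and $h_2$ meet at a single transverse point, then $f(h_1)\cap f(h_2)$ is non-empty, it cannot consist of two points (else $f^{-1}$ would exhibit a Type 4 intersection between $h_1$ and $h_2$), and it cannot be a single tangency point (else, tangency being preserved by $f^{-1}$, the pair $h_1,h_2$ would be tangent). Hence $\{f(h_1),f(h_2)\}$ is of Type 2 or Type 3.

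It then remains to separate the two cases, and here I would invoke the previous lemma, which characterises a Type 2 intersection point $p$ of $h_1$ and $h_2$ by the existence of a hypercycle $h$ tangent to $h_1$ at $p$ such that every hypercycle $h'$ below $h_1$ relatively to $h$ meets $h_2$ twice, and characterises a Type 3 intersection by the failure of this property. I claim this property is transported by $f$. Set $q:=f(p)$; since $p$ lies in $h_1\cap h_2$ and in $h\cap h_1$, and since $f(h_1)\cap f(h_2)$ is a single point while $f(h)\cap f(h_1)$ is a single (tangency) point, the point $q$ is simultaneously the transverse intersection point of $f(h_1),f(h_2)$ and the tangency point of $f(h),f(h_1)$. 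Now $f(h)$ is tangent to $f(h_1)$ at $q$; the relation ``below $h_1$ relatively to $h$'' — which by definition says ``$h$ is between $h_1$ and $\,\cdot\,$'' — is preserved by the earlier lemma on the ``between'' relation; and ``meets $h_2$ twice'' is preserved because $f$ respects Type 4 intersections. Thus the whole characterising statement passes from $h_1,h_2$ to $f(h_1),f(h_2)$, and the same argument applied to $f^{-1}$ gives the converse. Hence $\{h_1,h_2\}$ is of Type 2 if and only if $\{f(h_1),f(h_2)\}$ is, and combined with the first paragraph the same equivalence holds for Type 3.

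The only point needing care — and the closest thing to an obstacle — is verifying that every piece of data in the characterisation (the intersection point $p$, the auxiliary tangent hypercycle $h$, the ``below/between'' relation, the two–point intersections) is genuinely anchored at $f(p)$ and genuinely preserved by the bijection; but each of these is one of the statements already proved in this section, so the argument is essentially bookkeeping.
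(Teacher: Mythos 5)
Your proposal is correct and is exactly the derivation the paper intends: the authors state this as an immediate corollary of the preceding lemma, whose characterisation of Type 2 versus Type 3 (via tangency, the ``between/below'' relation, and twice-intersecting hypercycles) is built entirely from intersection data already shown to be preserved by $f$ and $f^{-1}$. Your preliminary reduction to the set $\{$Type 2, Type 3$\}$ and the bookkeeping at the point $f(p)$ are the right (and only) details to check.
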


Now that we know $f$ preserves the intersection types, we show the following:
\begin{proposition}\label{hyp2}
If $h_1$ and $h_2$ have the same endpoints, then $f(h_1)$ and $f(h_2)$  have also the same endpoints. 
\end{proposition}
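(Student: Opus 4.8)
The plan is to characterise the relation ``$h_1$ and $h_2$ have the same pair of endpoints'' purely in terms of disjointness and Type~3 intersections, and then to invoke the fact, established earlier in this section, that $f$ and $f^{-1}$ send hypercycles to hypercycles and preserve each intersection type. The key claim would be that two hypercycles $h_1$ and $h_2$ have the same endpoints if and only if every hypercycle meeting $h_1$ at a Type~3 point also meets $h_2$ (and, symmetrically, every hypercycle meeting $h_2$ at a Type~3 point meets $h_1$; one of the two implications is in fact redundant). Granting this claim, the proposition follows immediately: the property is expressed through notions --- being a hypercycle, meeting at a Type~3 point, meeting at all --- that are all preserved by $f$ and $f^{-1}$, so if $h_1$ and $h_2$ satisfy it then so do $f(h_1)$ and $f(h_2)$.

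To prove the ``only if'' direction I would argue as follows. Suppose $h_1$ and $h_2$ both have endpoints $\{a,b\}$. Applying an isometry of $\h$ we may assume $\{a,b\}=\{0,\infty\}$, so that $h_1$ and $h_2$ become Euclidean rays from the origin, in particular disjoint. Let $h$ be a hypercycle meeting $h_1$ at a Type~3 point; then $h$ has no endpoint in common with $h_1$ and crosses $h_1$ at a single transverse point, so, cutting $h$ at that point, its two endpoints on $\partial\h$ lie one in each component of $\partial\h\setminus\{a,b\}$. Since $h_2$ also has endpoints $\{a,b\}$, it separates $\h$ into two regions whose traces on $\partial\h$ are exactly those two components; so $h$, running from one component to the other, must cross $h_2$ too. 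The symmetric statement is identical.

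For the converse I would argue by contraposition. Assume $h_1$ and $h_2$ do not have the same endpoints, and let $\{c,d\}$ be the endpoints of $h_2$. Since $\{a,b\}\ne\{c,d\}$, at least one of $a,b$, say $b$, is not an endpoint of $h_2$; then $b\notin\overline{h_2}=h_2\cup\{c,d\}$, so some neighbourhood $U$ of $b$ in $\h\cup\partial\h$ is disjoint from $h_2$. Choose $e$ and $g$ distinct from $b$ and close enough to $b$, one in each component of $\partial\h\setminus\{a,b\}$, and take $h$ to be a hypercycle with endpoints $\{e,g\}$ lying far enough from the geodesic $\gamma_{e,g}$, on the side of the short boundary arc through $b$, to be contained in $U$; this is possible since such hypercycles degenerate onto that boundary arc, which is a small arc around $b$. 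Then $h$ is disjoint from $h_2$, while $e$ and $g$ lie in different components of $\partial\h\setminus\{a,b\}$, so $h$ crosses $h_1$ at exactly one transverse point and shares no endpoint with it, that is, at a Type~3 point. Thus the characterising property fails, which completes the characterisation and hence the proposition.

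The only genuinely geometric point is the construction in the converse step: threading, through a prescribed neighbourhood of a chosen endpoint of $h_1$, a hypercycle that meets $h_1$ at a Type~3 point while missing $h_2$. This rests on the elementary fact that the hypercycles with a fixed pair of endpoints foliate $\h$ (together with the corresponding geodesic) and degenerate onto the two boundary arcs between those endpoints, so that one can slide such a hypercycle into any collar of $\partial\h$ near one of those endpoints; what then needs checking is only that the resulting intersection with $h_1$ is transverse and of Type~3, which follows from the endpoints of $h$ being separated by those of $h_1$.
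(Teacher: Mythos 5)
Your proof is correct and takes essentially the same route as the paper: both characterize ``$h_1$ and $h_2$ share both endpoints'' by the property that every hypercycle meeting $h_1$ at a Type~3 point must meet $h_2$, and then transport this property through $f$ and $f^{-1}$. The only difference is that you spell out the construction, near an endpoint of $h_1$ that is not an endpoint of $h_2$, of a hypercycle realizing a Type~3 intersection with $h_1$ while missing $h_2$ --- a step the paper merely asserts.
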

\begin{proof} Assume that $h_1$ and $h_2$ have the same endpoints. So every hypercycles $h$ that intersect $h_1$ once at point of Type 3 also intersect $h_2$. 

If $f(h_1)$ and $f(h_2)$ have at most one endpoint in common, there exists $h$  that intersect $f(h_1)$ once at a point of Type 3 such that $h$ is disjoint from $f(h_2)$. By taking $f^{-1}(h)$ we obtain a hypercycle that intersect $h_1$ at a point of Type 3 and disjoint from $h_2$, which is impossible. So, $f(h_1)$ and $f(h_2)$ have the same endpoints.  
\end{proof}

Now, we are able to give the proof of \Cref{main2}. 
\begin{proof}[\textbf{Proof \Cref{main2}}]
Let us show that a hypercycle-preserving bijection $f$ maps geodesics to geodesics. Let $g$ be a geodesic in $\mathbb{H}^2$ with endpoints $(x_1, x_2)$ and $r\geq0$. The set of all points at distance $r$ from $g$ is a crescent defined by two hypercylces $h_1$ and $h_2$ with endpoints $(x_1, x_2)$. Then $f(h_1)$ and  $f(h_2)$ have the same endpoints by \Cref{hyp2}, and let $g'$ denotes the geodesic with same endpoints as $f(h_1)$ and $f(h_2)$. Assume that there is a point $x\in g$ such that $f(x)\notin g'$.  There exists a hypercycle $h'$ passing through $f(x)$ and with same endpoints as  $f(h_1)$ and $f(h_2)$. So, $f^{-1}(h')$ is a hypercycle with the same endpoints as $g$ and intersecting $g$ at $x$, and this is impossible. 

So, $f(g)=g'$ and it follows that $f$ sends geodesics to geodesics. Using Jeffers Theorem (\cite{Jef}-Theorem 2), we conclude that $f$ is an isometry.   
\end{proof}
\section{Proof of \Cref{ivan}}
We will prove \Cref{ivan} for the goedesic case and we will end up with the horocycle and hypercycle cases.

We recall that Thurston earthquake theorem states that every element of $\mathrm{Homeo}^+(\mathbb{S}^1)$ is the boundary map of an earthquake of $\h$. We will use it in our proof together with the followings:
\begin{lemma}\label{earth}
Let $f$ be an automorphism of $\mathcal{K}_{geod}$; and let $g_1$, $g_2$ be two disjoint geodesics with no common endpoints. Then $f(g_1)$ and $f(g_2)$ have no endpoints in common. Equivalently, if  $g_1$, $g_2$ are two disjoint geodesics with a common endpoint, so are $f(g_1)$ and $f(g_2)$.
\end{lemma}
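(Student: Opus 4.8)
The plan is to isolate a property of pairs of vertices of $\mathcal{K}_{geod}$ that is phrased purely in terms of the adjacency relation (i.e.\ disjointness of geodesics) and that distinguishes, among \emph{disjoint} pairs, the asymptotic ones (sharing an ideal endpoint) from the ultraparallel ones (no common endpoint). Once such a property is found, the lemma is immediate: an automorphism $f$ preserves adjacency and non-adjacency, hence preserves the property, hence cannot send an ultraparallel disjoint pair to an asymptotic one, nor (by the contrapositive, which gives the ``equivalently'' clause) an asymptotic disjoint pair to an ultraparallel one.

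The property I would use is a combinatorial notion of \emph{betweenness}: given disjoint $g_1,g_2$, I claim they have no common endpoint if and only if there is a geodesic $h$ adjacent to both $g_1$ and $g_2$ such that every geodesic $k$ which is non-adjacent to $g_1$ and non-adjacent to $g_2$ is also non-adjacent to $h$. Geometrically, such an $h$ is disjoint from $g_1$ and $g_2$ and is crossed by every geodesic that crosses both of them, i.e.\ it separates $g_1$ from $g_2$. To prove the claim I would pass to $\mathbb{S}^1=\partial\h$ and use the standard fact that $\{x,y\}$ is disjoint from $\{u,v\}$ iff $\{x,y\}$ does not separate $\{u,v\}$ on the circle. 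In the ultraparallel case, list the endpoints of $g_1,g_2$ in cyclic order as $a,b,c,d$; then any $h=\{x,y\}$ with $x\in(b,c)$ and $y\in(d,a)$ is disjoint from both, and a short linking argument shows that any $k$ crossing both $g_1$ and $g_2$ has one endpoint strictly inside $(a,b)$ and one strictly inside $(c,d)$, hence is separated by $\{x,y\}$ and crosses $h$. In the asymptotic case, write the endpoints as $\{p,a\}$ and $\{p,b\}$ with cyclic order $p,a,b$, and check that any $h$ disjoint from both has (up to endpoints equal to $p,a,b$) both endpoints inside a single arc among $(p,a)$, $(a,b)$, $(b,p)$; in each position one produces a geodesic $k=\{x,y\}$ with $x\in(p,a)$ and $y\in(b,p)$, taken close enough to $p$ to be disjoint from $h$ while still crossing both $g_1$ and $g_2$, so no common neighbour of $g_1,g_2$ separates them.

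The main obstacle I expect is the bookkeeping in the asymptotic direction: one must verify, for \emph{every} geodesic $h$ disjoint from both $g_1$ and $g_2$ — including the degenerate ones having an endpoint equal to $a$, $b$, or $p$, or endpoints lying in two different arcs — that there is a geodesic crossing both $g_1$ and $g_2$ yet disjoint from $h$. The clean way to organize this is to stratify the common neighbours of $g_1,g_2$ according to which of the arcs $(p,a),(a,b),(b,p)$ contain the endpoints of $h$, and in every stratum push the endpoints of the desired $k$ arbitrarily close to the shared ideal point $p$, where $g_1$ and $g_2$ become asymptotically close and nothing disjoint from both can lie between them. The ultraparallel direction and the passage from the combinatorial claim to the statement of the lemma are then routine.
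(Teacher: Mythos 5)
Your overall strategy---isolating a property of disjoint pairs, expressible purely in terms of the adjacency relation of $\mathcal{K}_{geod}$, that distinguishes asymptotic from ultraparallel pairs---is the right one and is in the same spirit as the paper. However, the specific property you propose is false, in the asymptotic direction. Take $g_1=\{p,a\}$ and $g_2=\{p,b\}$ with cyclic order $p,a,b$, and let $h=\{p,c\}$ with $c$ in the open arc $(a,b)$ not containing $p$. This $h$ is disjoint from both $g_1$ and $g_2$ (it shares the ideal point $p$ with each), so it is a common neighbour. Moreover, as you yourself compute, any geodesic $k$ crossing both $g_1$ and $g_2$ has one endpoint in $(p,a)$ and the other in $(b,p)$; these two arcs lie on opposite sides of $\{p,c\}$, so $k$ crosses $h$ no matter how close its endpoints are pushed toward $p$. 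Thus $h$ is a separating common neighbour of an asymptotic pair, and your property holds for \emph{both} kinds of disjoint pairs, so it distinguishes nothing. This is exactly the degenerate case (an endpoint of $h$ equal to $p$) that you flagged as the main obstacle, and the proposed remedy---shrinking $k$ toward $p$---fails for it, because $h$ itself enters the cusp between $g_1$ and $g_2$ at $p$.

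The gap is repairable, and the repair is essentially what the paper does. In the ultraparallel case, with endpoints in cyclic order $x_1,x_2,y_1,y_2$, the two diagonal geodesics $h_1=\{x_1,y_1\}$ and $h_2=\{x_2,y_2\}$ are each separating common neighbours of $g_1,g_2$ in your sense, and they cross each other. In the asymptotic case, your shrinking argument does show that every separating common neighbour must have $p$ as an endpoint, so any two of them are disjoint. Hence the correct adjacency-definable certificate is the existence of \emph{two mutually crossing} separating common neighbours, not of one. The paper packages this as the four-geodesic configuration $\{g_1,g_2,h_1,h_2\}$ subject to: $h_1\cap h_2\neq\emptyset$, $g_i\cap h_i=\emptyset$, every geodesic meeting some $g_i$ meets $h_1\cup h_2$, and every geodesic meeting both $g_1$ and $g_2$ meets both $h_1$ and $h_2$; it then argues directly about the endpoints of the image configuration. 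As written, your argument does not establish the lemma.
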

\begin{proof}
Let $g_1$ and $g_2$ be two disjoint geodesics with no common endpoints --let's say $(x_1,x_2)$ and $(y_1,y_2)$ respectively-- with the following cyclic order $x_1<x_2<y_1<y_2$. Set $h_1$ and $h_2$ to be the geodesics defined by $(x_1,y_1)$ and $(x_2,y_2)$ respectively. It follows that the set $\{g_1,g_2,h_1,h_2\}$ has the following properties (see \Cref{exten}): 
\begin{itemize}
\item $g_1\cap g_2=\emptyset$, $h_1\cap h_2\neq\emptyset$ and $g_i\cap h_i=\emptyset$ for $i=1,2$;
\item for every geodesic $g$, $g\cap g_i\neq\emptyset\implies g\cap (h_1\cup h_2)\neq\emptyset$;
\item for every geodesic $g$, $|g\cap(g_1\cup g_2)|=2\implies |g\cap h_1|+ |g\cap h_2|=2$. 
\end{itemize}
\begin{figure}[htbp]
\begin{center}
\includegraphics[scale=0.5]{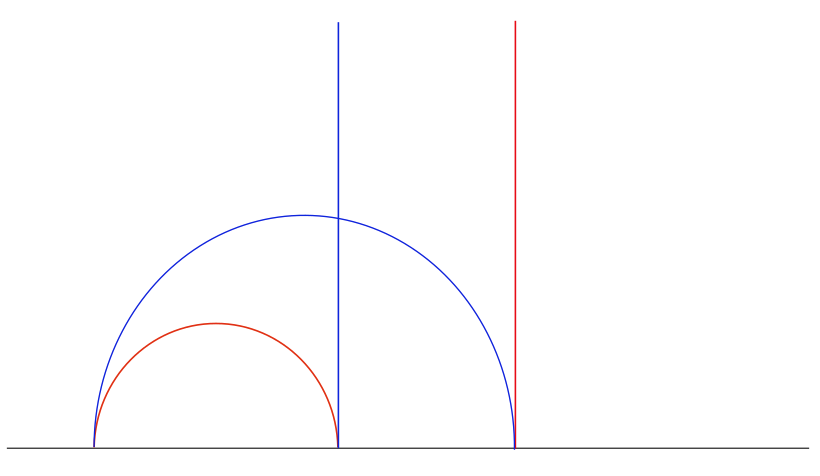}
\put(-150,40){\tiny{$g_1$}}
\put(-75,40){\tiny{$g_2$}}
\put(-120,40){\tiny{$h_2$}}
\put(-145,66){\tiny{$h_1$}}
\put(-185,02){\tiny{$x_1$}}
\put(-125,02){\tiny{$x_2$}}
\put(-79,02){\tiny{$y_1$}}
\put(-110,120){\tiny{$y_2=\infty$}}
\caption{{\color{red}}}
\label{exten}
\end{center}
\end{figure}

Therefore, the set $\{f(g_1),f(g_2),f(h_1),f(h_2)\}$ satisfy the same property. So, for $i=1,2$ if $f(g_i)$ has at most one endpoint in common with $f(h_1)\cup f(h_2)$, then there exists a geodesic $g$ intersecting $f(g_i)$ but disjoint from $f(h_1)\cup f(h_2)$; which is a contradiction. We deduce that for $i\in\{1,2\}$, $f(g_i)$ share one endpoint with $f(h_1)$ and the other one with $f(h_2)$. 

Now, if $f(g_1)$ and $f(g_2)$ has one endpoint in common, there exists a geodesic $g$ such that $|g\cap(f(g_1)\cup f(g_2))|=2$ but $|g\cap f(h_1)|+|g\cap f(h_2)|=1$ which also leads to a contradiction. In conclusion, we proved that $f(g_1)$ and $f(g_2)$ have different endpoints. 
\end{proof}

\begin{proof}[Proof of \Cref{ivan}: Geodesic case] We will show that an automorphism of the geodesic graph  is induced by an earthquake maps after postcomposing by an isometry. 

 Let's show that $\hat{f}\in\mathrm{Aut}(\mathcal{K}_{geod})$ defined a map on the boundary. Consider $x\in\mathbb{S}^1$; $g_1$ and $g_2$ be two disjoint geodesics with $x$ as a common endpoint. So, $\hat{f}(g_1)$ and $\hat{f}(g_2)$ are disjoint. Moreover $\hat{f}(g_1)$ and $\hat{f}(g_2)$  have a common endpoint $y$ by \Cref{earth}; set $\partial \hat{f}(x):=y$. The map $\partial\hat{f}$ is well-defined and is a bijection. Let $(x_n)$ be an increasing sequence with $x$ as limit. The sequence $g_n:=(x,x_n)$ of geodesics with endpoints $x$ and $x_n$ is such that $g_{n}$ is between $g_{n-1}$ and $g_{n+1}$, which mean every geodesic $g$ intersecting $g_{n-1}$ and $g_{n+1}$ also intersect $g_n$. Moreover, there is no geodesic $g$ with one endpoint equal to $x$ such that every $g_n$ is between $g$ and $g_0$. It follows that $h_n:=\hat{f}(g_n)=(\partial\hat{f}(x),\partial\hat{f}(x_n))$ is a sequence of geodesics with a common endpoint such that $h_{n}$ is between $h_{n-1}$ and $h_{n+1}$. This implies that the sequence $(\partial\hat{f}(x_n))$ is either increasing or decreasing; and let $l:=\lim \partial\hat{f}(x_n)$. If $l\neq \partial\hat{f}(x)$, then the geodesic $h$ with endpoint $(\partial\hat{f}(x),l)$ is such that $h_n$ is between $h_0$ and $h$ for all $n$, which leads to a contradiction. So, $l=\partial\hat{f}(x)$ and we have continuity for $\partial\hat{f}$. Hence, $\partial\hat{f}$ is a homeomorphism. 
Now let's show that after postcomposing with an isometry, $\partial\hat{f}$ is orientation-preserving. Since the action of isometries on $\mathbb{S}^1$ is 3-transitive, after postcomposing with an isometry we can assume that $\partial\hat{f}$ fixes $-1$, $1$ and $\infty$. Let $x_1$, $x_2$  be two points on $\mathbb{S}^1-[-1,1]$ such $x_1<x_2$ (where "$<$" stands for the counterclockwise order on $\mathbb{S}^1$) and assume that $\partial\hat{f}(x_2)<\partial\hat{f}(x_1)$. Then $g_1:=(1,x_1)$ and $g_2:=(-1,x_2)$ are disjoint but not $\hat{f}(g_1)=(1,\partial\hat{f}(x_1))$ and $\hat{f}(g_2)=(-1,\partial\hat{f}(x_2))$, which is absurd since $\hat{f}$ is an automorphism. The other cases follow the same idea since there exists a segment $I\in\{[-1,1], [1,\infty], [\infty, -1]\}$ which does not contained $x_1$ and $x_2$. Thus $\partial\hat{f}\in\mathrm{Homeo}^+(\mathbb{S}^1)$ and by Thurston earthquake theorem, $\partial\hat{f}\in\mathrm{Homeo}^+(\mathbb{S}^1)$ is induced by an earthquake map.   
\end{proof}

Let's introduce some terminology that are going to be useful for the proof in horocycle and hypercycle cases. 
\begin{figure}[htbp]
\begin{center}
\includegraphics[scale=0.5]{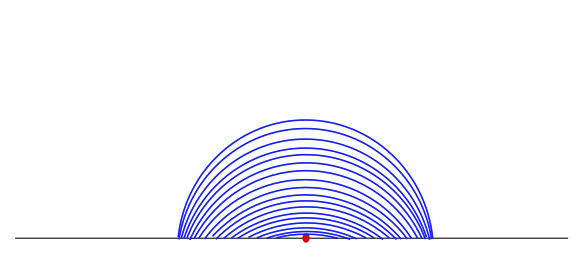}\hspace{0.6cm}\includegraphics[scale=0.4]{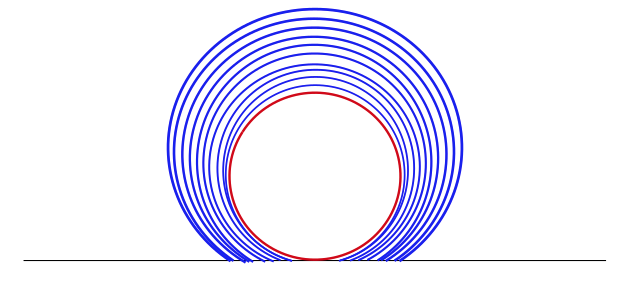}\hspace{0.6cm}\includegraphics[scale=0.5]{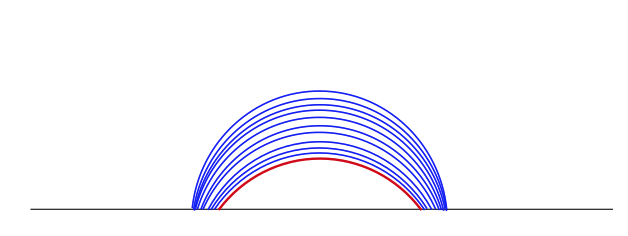}
\caption{Three different types of continuous family of hypercycles converging to a point, a horocycle and a hypercycle.}
\label{lime}
\end{center}
\end{figure}
\begin{definition}
A family $\{h_t\}_{t\in[0,1]}$ of hypercycles (respectively horocycles) is \textit{continuous} if:
\begin{itemize}
\item $h_l$ is between $h_t$ and $h_s$, i.e, every hypercycles (respectively horocycles) intersecting $h_s$ and $h_t$ intersects $h_l$ whenever $t<l<s$;
\item if $h$ is a hypercycle (respectively horocycle) between $h_s$ and $h_l$, then either $h$ intersects uncountably many $h_t$
's or there exists $t\in[0,1]$ such that $h=h_t$ (respectively there exists $t\in[0,1]$ such that $h=h_t$). This means that there is no gap in the family $\{h_t\}$.  
\end{itemize}
A continuous family $\{h_t\}_{t\in[0,1]}$ \textit{exhausts} a continuous family $\{h'_t\}_{t\in[0,1]}$ if for every $t\in[0,1]$, there exists $t_0\in[0,1]$ such that  $h_s$ is between $h'_t$ and $h_l$ for all $l>s>t_0$. 
\end{definition}

By definition, elements in a continuous family of horocycles must have the same center. A horocycle $h$ can be seen as the limit set of a continuous family $\{h_t\}$ of hypercycles. Even more, we have:
\begin{lemma}\label{disj}
 A horocyle $h$ is disjoint from a hypercycle $h'$ if and only if there exists a continuous family $\{h'_t\}$ of hypercycles converging to $h$ and such that $h'_0=h'$.
 \end{lemma}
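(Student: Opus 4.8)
The plan is to prove both implications by producing explicit continuous families of hypercycles, working in the upper half-plane model where we can normalize the horocycle to be a horizontal line. Up to applying an isometry (which sends horocycles to horocycles, hypercycles to hypercycles, and preserves disjointness), I would assume $h$ is the horizontal line $L_1 = \{z : \operatorname{Im}(z) = 1\}$, so that the open disk bounded by $h$ is the strip $\{0 < \operatorname{Im}(z) < 1\}$ (the horodisk at $\infty$). For the forward implication, suppose the hypercycle $h'$ is disjoint from $h$. Then $h'$ lies entirely inside the horodisk bounded by $h$ (a hypercycle has both endpoints on $\partial\h$, hence cannot lie in the component containing $\infty$, which meets $\partial\h$ only at $\infty$; and since it is disjoint from $L_1$ it stays in one component). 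I would then construct the family $\{h'_t\}_{t\in[0,1]}$ by ``pushing $h'$ up toward $L_1$'': parametrize the hypercycles with the same endpoints as $h'$ by their (signed) hyperbolic distance $r$ to the geodesic $g$ with those endpoints, and let $h'_t$ be the one at distance $r_t$, where $r_0$ is the distance realized by $h'$ and $r_t$ increases continuously to the value $r_1$ for which the hypercycle is tangent to $L_1$ — or, if $h'$ already has an endpoint giving a hypercycle family that limits onto $L_1$, reparametrize so that $h'_t \to h$ as $t \to 1$. One checks the betweenness and no-gap conditions directly from the fact that this is a monotone one-parameter family sweeping out an annular region, and that the limit set of $\{h'_t\}$ as $t\to 1$ is exactly $h$.

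For the converse, suppose $\{h'_t\}$ is a continuous family of hypercycles with $h'_0 = h'$ converging to $h$, and suppose toward a contradiction that $h'$ meets $h$. Since $\{h'_t\}$ converges to the horocycle $h$, for $t$ close to $1$ the hypercycle $h'_t$ is ``close to'' $h$ in the sense that it lies in any prescribed neighborhood of $h$ and separates $h$ from $h'_0$ in the appropriate sense; more precisely, by the exhaustion/betweenness structure, $h'_t$ is between $h$ and $h' = h'_0$ for $t$ near $1$. But a hypercycle meeting $h$ cannot be separated from $h$ by another hypercycle disjoint from... — here I would instead argue as follows: by the betweenness condition, since $h' = h'_0$ and the family limits onto $h$, any hypercycle between $h'_s$ and $h$ (for small $s$) must intersect $h'_0$; taking such an auxiliary hypercycle that crosses $h$ transversally but stays in a small neighborhood of $h$ away from the point where $h'$ meets $h$ yields a hypercycle intersecting $h$ but disjoint from $h'_0$, contradicting the betweenness/exhaustion property of the family. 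This forces $h' \cap h = \emptyset$.

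The main obstacle I expect is making the convergence ``$\{h'_t\}$ converges to $h$'' precise and robust enough to run the topological argument in the converse direction: the definition of continuous family given here is purely combinatorial (in terms of intersection patterns and the ``between'' relation), so one cannot simply invoke Hausdorff convergence of subsets — one must show that the combinatorial data forces the geometric limit set to be a horocycle with a specific center and that $h'_0$ lies in the horodisk it bounds. I would handle this by first establishing, from the no-gap condition, that a continuous family of hypercycles that is not eventually constant must have all its members sharing a limiting pair of endpoints or a limiting single endpoint on $\partial\h$, and that the only way the family ``closes up'' onto a non-hypercycle object is for the endpoints to coalesce to a single point $p$, with the limit being the horocycle at $p$ through the appropriate height; once the center $p$ and the limiting horocycle $h$ are pinned down, disjointness of $h'_0$ from $h$ follows because $h'_0$ is one of the hypercycles in the nested sweep interior to the horodisk. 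The forward direction, by contrast, is essentially an explicit construction and should be routine.
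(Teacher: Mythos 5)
Your forward-direction construction has a genuine geometric error. You propose to keep the endpoints of $h'$ fixed and vary only the distance to the geodesic $g$ with those endpoints, increasing it until the hypercycle becomes tangent to $L_1$. No family of hypercycles with \emph{fixed} endpoints $\{x_1,x_2\}\subset\R$ can converge to a horocycle: on one side of $g$ the arcs flatten onto the boundary segment $[x_1,x_2]$, on the other they blow up along the vertical rays through $x_1$ and $x_2$, and the member tangent to $L_1$ is still a hypercycle, not the horocycle $h$. For the family to accumulate onto a horocycle centered at $p$, the endpoints of $h'_t$ must coalesce to $p$ as $t\to 1$. This is exactly what the paper's construction does: with $h$ the horizontal horocycle through $i$ (center $\infty$) and $h'$ meeting the $y$-axis orthogonally at $ie^{-a}$ with endpoints $\pm b$, it takes $h_t$ to be the hypercycle through $ie^{a(t-1)}$ with endpoints $\pm b^{1/(1-t)}$, so the endpoints run off to $\infty$ (the center of $h$) while the arcs flatten up to the line $\operatorname{Im}(z)=1$. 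Your fallback clause (``reparametrize so that $h'_t\to h$'') does not repair this, because the defect is not one of parametrization but of which one-parameter family you chose. A smaller slip: the horodisk bounded by $L_1$ is $\{\operatorname{Im}(z)>1\}$ (the side containing the center $\infty$), not the strip $\{0<\operatorname{Im}(z)<1\}$; the correct statement is that a hypercycle disjoint from $L_1$ lies entirely \emph{outside} that horodisk, i.e.\ below the line, which is what you actually use.

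On the converse, note that the paper does not argue it at all; with the notion of convergence it uses elsewhere (in Lemma~\ref{lim} the limit horocycle is characterized as the maximal horocycle disjoint from every member of the family), disjointness of $h=\lim h'_t$ from $h'_0=h'$ is essentially built into the definition, so your elaborate separation argument is more than is needed. Your closing observation --- that the combinatorial definition of ``continuous family converging to $h$'' must be pinned down before either direction is rigorous --- is a fair criticism of the statement as given, but it does not substitute for a correct construction in the forward direction, which is the actual content of the lemma.
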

\begin{proof} Since isometries acts transitively on horocylces, we can assume that $h$ is centered at $\infty$ and passes through $i$. The hypercyles $h'$ is below $h$ and up to a translation, we can assume that $h'$ intersects the $y$-axis orthogonally at $ie^{-a}$ with $a>0$. Let $b$ and $-b$ be the endpoints of $h'$. We defined $h_t$ to be the hypercycle with endpoints $\{-b^{\frac{1}{1-t}},b^{\frac{1}{1-t}}\}$ and passing through $z_t:=ie^{a(t-1)}$ for $t\in[0,1)$. The set $\{h_t\}_{t\in[0,1]}$ is a continuous family of hypercycles that accumulate to the horizontal horocycle $h$ passing through $i$.   
\end{proof}

\begin{figure}[htbp]
\begin{center}
\includegraphics[scale=0.3]{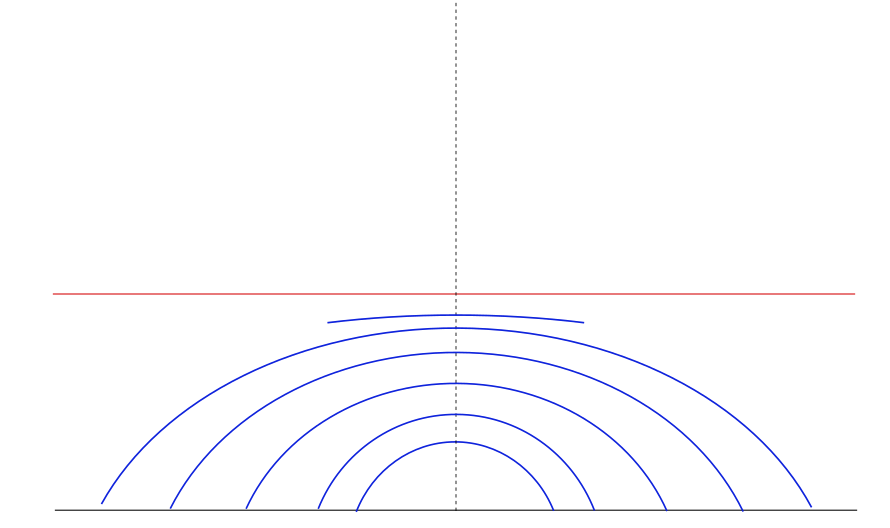}
\caption{}
\label{}
\end{center}
\end{figure}

Since continuous families of horocycles/hypercycles are defined using intersection properties, it follows that they are preserved under automorphisms, i.e, an automorphism sends a continuous family of horocycles/hypercycles to a continuous family.  As consequence, we have:

\begin{lemma}\label{gent} Let $\hat{f}$ be an automorphism of $\mathcal{K}_{horo}$ (respectively $\mathcal{K}_{hyper}$): 
\begin{enumerate}
\item If $h_1$ and $h_2$ are two tangent horocycles, then $\hat{f}(h_1)$ is tangent to $\hat{f}(h_2)$. 
\item If $h_1$ and $h_2$ are two hypercycles with the same endpoints, so are $\hat{f}(h_1)$ and $\hat{f}(h_2)$.   
\end{enumerate}
\end{lemma}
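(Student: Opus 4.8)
\textbf{Proof proposal for Lemma~\ref{gent}.}

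The plan is to exploit the fact, already noted just before the statement, that continuous families of horocycles/hypercycles are defined purely in terms of intersection patterns and are therefore preserved by any automorphism $\hat f$ of $\mathcal{K}_{horo}$ (respectively $\mathcal{K}_{hyper}$). The two assertions will then be obtained by characterizing ``tangency of horocycles'' and ``having the same endpoints'' intrinsically, i.e.\ by combinatorial/intersection-theoretic data visible to $\hat f$.

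For (1), I would first observe that two horocycles $h_1,h_2$ are either disjoint (an edge of $\mathcal{K}_{horo}$), tangent, or crossing twice, and that tangency is the degenerate case sitting between these. I would realize tangency via continuous families: if $h_1$ and $h_2$ are tangent, then by (an argument analogous to) \Cref{disj} there is a continuous family $\{h^{(1)}_t\}$ of horocycles with $h^{(1)}_0=h_1$ all tangent to $h_2$ at the same point, and symmetrically a family converging to $h_1$ from the side of $h_2$; more usefully, tangency of $h_1,h_2$ is equivalent to: $h_1\cap h_2\neq\emptyset$, and there is a continuous family $\{h_t\}$ of hypercycles with $h_0=h_2$ exhausting $h_1$ (so that $h_1$ is the limit horocycle) with every $h_t$ disjoint from $h_1$ except in the limit — the point being that a double intersection cannot be approximated this way while a single tangential contact can. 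Since $\hat f$ preserves ``$h_1\cap h_2\neq\emptyset$'' (non-edge of the graph), preserves the property of being a horocycle versus a hypercycle (these are the two vertex types, distinguishable since only horocycles admit a center, equivalently only horocycle-families have the ``common center'' rigidity), and preserves continuous families and the exhausting relation, it carries a tangency certificate for $(h_1,h_2)$ to one for $(\hat f(h_1),\hat f(h_2))$. The only remaining alternative for $\hat f(h_1),\hat f(h_2)$ is a double intersection, which would produce, via $\hat f^{-1}$, a double intersection for $h_1,h_2$ — a contradiction. Hence $\hat f(h_1)$ is tangent to $\hat f(h_2)$.

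For (2), I would characterize ``$h_1$ and $h_2$ are hypercycles with the same endpoints'' by: $h_1\cap h_2=\emptyset$ (edge of $\mathcal{K}_{hyper}$), together with the existence of a continuous family $\{h_t\}$ of hypercycles with $h_0=h_1$, $h_1$-boundary-data frozen, exhausting $h_2$ — i.e.\ $h_1$ and $h_2$ bound a ``crescent'' that is swept out by a continuous family, and moreover every hypercycle between them lies in this family (no gaps). Equivalently, and perhaps more robustly, I would use that $h_1,h_2$ have the same endpoints iff they are disjoint and the set of hypercycles disjoint from \emph{both} of them, together with the separation pattern they induce on all other hypercycles, matches the model configuration; this is expressible through intersection types (Types~1--4 from Section~3) and continuous families, all of which $\hat f$ preserves. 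Applying $\hat f$ and then $\hat f^{-1}$ to rule out the alternative (at most one common endpoint) finishes it; this is essentially the packaging of \Cref{hyp2} at the graph level, now that intersection types are known to be combinatorial.

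The main obstacle I anticipate is in (1): making rigorous the claim that tangency (as opposed to a genuine double intersection, or disjointness) is detected by a continuous-family/exhaustion certificate that uses \emph{only} the graph data $\hat f$ sees — in particular one must be sure that the two vertex types (horocycle vs.\ hypercycle) are graph-theoretically distinguishable and that the ``exhausts'' and ``between'' relations pin down the limiting object as a horocycle tangent to $h_2$ rather than, say, a hypercycle accumulating onto it. Once that intrinsic characterization of tangency is nailed down, everything else is a routine ``apply $\hat f$, then $\hat f^{-1}$ to the putative counterexample'' argument of the kind already used repeatedly in Sections~2 and~3.
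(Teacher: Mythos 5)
Your overall strategy --- characterize tangency and the same-endpoints relation purely by intersection patterns and continuous families, observe that these are preserved by $\hat f$, and rule out the remaining alternative by applying $\hat f^{-1}$ --- is exactly the paper's. Two corrections, though. First, for part (1) you are working in $\mathcal{K}_{horo}$, whose vertices are horocycles only, so your tangency certificate cannot invoke a continuous family of \emph{hypercycles}; and as literally stated the certificate is inconsistent (you ask for a family with $h_0=h_2$ all of whose members are disjoint from $h_1$ ``except in the limit'', yet $h_0=h_2$ itself meets $h_1$). The paper's certificate is the clean version of what you are reaching for: $h_1\cap h_2\neq\emptyset$ together with a continuous family $\{h^1_t\}$ of \emph{horocycles} with $h^1_0=h_1$ and $h^1_t\cap h_2=\emptyset$ for all $t>0$ (shrink $h_1$ toward its center). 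A transverse double intersection survives such a deformation while a tangency does not, and it is this dichotomy that $\hat f$ and $\hat f^{-1}$ transport. Second, your worry about graph-theoretically distinguishing horocycle-vertices from hypercycle-vertices is moot: $\mathcal{K}_{horo}$ and $\mathcal{K}_{hyper}$ are separate graphs, so in each statement all vertices in sight are of one kind. For part (2) the paper uses an even shorter certificate --- $h_1$ and $h_2$ have the same endpoints iff some hypercycle $h_3$ is \emph{between} them, i.e.\ every hypercycle meeting both meets $h_3$ --- rather than your crescent-sweeping family or the Type~1--4 analysis of Section~3 (which concerns point-bijections, not graph automorphisms, so it is not directly available here); but both certificates are intersection-theoretic and work for the same reason.
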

\begin{proof}
Let $\hat{f}$ be an automorphim and assume that $h_1$ and $h_2$ are two tangent horocycles. So, there exists a continuous family $\{h^1_t\}$ such that $h_0^1=h_1$ and $h^1_t\cap h_2=\emptyset$ for all $t>0$. The set $\{\hat{f}(h^1_t)\}$ is a continuous family where $\hat{f}(h^1_0)=\hat{f}(h_0)$ intersects $\hat{f}(h_2)$ and $\hat{f}(h^1_t)$ is disjoint from $h_2$ for all $t>0$. Hence, $\hat{f}(h_1)$ is tangent to $\hat{f}(h_2)$. 

Two hypercycles $h_1$ and $h_2$ have the same endpoints if and only if there exists a hypercycle $h_3$ between $h_1$ and $h_2$ which is characterized by an intersection property. Since the property of having the same endpoints is characterized by intersection, it is preserved by automorphisms.  
\end{proof}
\begin{lemma}\label{lim}
Let $\{h_t\}$ be a continuous family of hypercycles converging to a horocycle. Then, we have one of the following situations:
\begin{itemize}
\item the union of all $h_t$ is equal to one of the components of $\h-h_0$;
\item $\{h_t\}$ converges to a horocycle or a hypercycle/geodesic. 
\end{itemize}
Moreover, an automorphism of $\mathcal{K}_{hyper}$ preserves the convergence type of a family $\{\hat{f}(h_t)\}$. 
\end{lemma}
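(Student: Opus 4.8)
The plan is to use that a continuous family is \emph{monotone}, so that its asymptotic behaviour is governed by a single region of $\h$; the trichotomy is then read off geometrically, and invariance follows because every notion involved is expressed through intersection patterns of hypercycles. I would orient $[0,1]$ so that $t=0$ gives $h_0$, and for each $t$ let $R_t$ be the component of $\h\setminus h_t$ not containing $h_0$. The first clause of the definition of a continuous family says exactly that $R_s\subseteq R_t$ whenever $t<s$; set $R:=\bigcap_t R_t$ and $U:=\bigcup_t h_t$. Using the ``no gap'' clause and the monotonicity of the sweep, the forward component $C$ of $\h\setminus h_0$ decomposes as $(U\setminus h_0)\sqcup R$: a point of $C$ lying on no leaf cannot change side with respect to the $h_t$, hence lies in every $R_t$ and so in $R$; every other point of $C$ is caught by some leaf. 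Finally, $\overline{R_t}\cap\partial\h$ is a closed arc whose endpoints are the two endpoints $a_t,b_t$ of $h_t$, and these arcs nest, so $a_t$ and $b_t$ converge monotonically along $\partial\h$ to points $a_\infty,b_\infty$.

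If $R=\emptyset$ then $U=C\cup h_0$, i.e.\ the union of the leaves is one side of $h_0$ — the first bullet. Suppose instead $R\neq\emptyset$ and the family is non-constant, so $R\subsetneq R_0$ and the topological frontier $h_\infty$ of $R$ in $\h$ is a non-empty curve; it is precisely the Hausdorff limit of the leaves $h_t$. Since each $h_t$ is an arc of a Euclidean circle or line, $h_\infty$ lies on a non-degenerate generalized circle $C_\infty$ of $\widehat{\C}$, and $a_\infty,b_\infty$ are the endpoints of $h_\infty$. If $a_\infty\neq b_\infty$, then $C_\infty$ meets $\partial\h$ transversally in these two points and $h_\infty$ is the arc of $C_\infty$ bounding $R$ — a geodesic if $C_\infty\perp\partial\h$, a hypercycle otherwise. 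If $a_\infty=b_\infty=:p$, then both ends of $h_\infty$ are at $p$, so $C_\infty\cap\partial\h$, which is non-empty (it contains $p$), cannot be transverse: a transverse point would come with a second one, forcing a second endpoint of $h_\infty$. Hence $C_\infty$ is tangent to $\partial\h$ at $p$ and $h_\infty\cup\{p\}$ is a horocycle centred at $p$. So the second bullet holds, refining as: the limit is a geodesic or a hypercycle exactly when $a_\infty\neq b_\infty$, and a horocycle exactly when $a_\infty=b_\infty$.

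For the final assertion, note that ``continuous family'', the betweenness relation, and the relation ``one family exhausts another'' are all intrinsic to $\mathcal{K}_{hyper}$, so $\hat{f}$ carries $\{h_t\}$ to a continuous family and preserves all of them. The geodesic/hypercycle case is detected combinatorially by the condition that some hypercycle be separated from $h_0$ by \emph{every} $h_t$: such a hypercycle necessarily lies in $R$, and $R$ contains a hypercycle precisely when it is a half-disk — neither $\emptyset$ nor a horodisk does. It remains to tell the first bullet from the horocyclic case, in both of which no hypercycle lies beyond the family; here I would compare $\{h_t\}$ — via the ``exhausts'' relation — with auxiliary continuous families of hypercycles sharing a fixed pair of endpoints, and use the dictionary between intersection patterns of hypercycles and the cyclic order of their endpoints supplied by \Cref{gent} and the intersection-type analysis above, in order to recognize the collapse $a_t,b_t\to p$ combinatorially. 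This last step is the main obstacle: because a horodisk contains no hypercycle, an automorphism of $\mathcal{K}_{hyper}$ never ``sees'' the limiting horocycle directly, and it can be detected only through the shrinking of the endpoint arcs.
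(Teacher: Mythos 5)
Your derivation of the trichotomy itself is sound, and in places more careful than the paper's: the nested regions $R_t$, the decomposition of the forward component as $(U\setminus h_0)\sqcup R$, and the identification of the frontier of $R$ with an arc of a generalized circle (geodesic/hypercycle when $a_\infty\neq b_\infty$, horocycle when $a_\infty=b_\infty$) is a clean way to organize the first half. Your criterion for detecting the hypercycle/geodesic limit combinatorially --- $R$ contains a full hypercycle if and only if the limit has two distinct endpoints --- is also correct and matches the paper's use of hypercycles disjoint from every leaf.

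However, there is a genuine gap, and you name it yourself: you do not actually prove that an automorphism distinguishes the case ``$\{h_t\}$ fills a component of $\h-h_0$'' from the case ``$\{h_t\}$ converges to a horocycle,'' you only sketch a strategy (comparing with auxiliary families via the exhaustion relation) and call it ``the main obstacle.'' This is precisely the clause of the lemma that the paper needs later: the map $\hat{f}^*$ on $\mathcal{K}_{horo}$ is defined by sending the horocycle limit of $\{h_t\}$ to the horocycle limit of $\{\hat{f}(h_t)\}$, so without this step the lemma does not do its job. The paper closes the gap with a concrete intersection-theoretic test that you could have used: in the horocycle case there exists a hypercycle with the \emph{same endpoints as $h_0$} that intersects every leaf $h_t$ (take one crossing the limiting horocycle; since the leaves separate $h_0$ from the limit, such a hypercycle must meet each of them), while no hypercycle in the forward component is disjoint from all leaves; in the filling case, by contrast, every hypercycle in the forward component with the same endpoints as $h_0$ is eventually between $h_0$ and some $h_t$. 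Both conditions are phrased purely in terms of incidence in $\mathcal{K}_{hyper}$ (using \Cref{gent} to recognize ``same endpoints as $h_0$''), hence are preserved by $\hat{f}$. Your endpoint-collapse idea could probably be made to work, but as written it is an announcement of a proof rather than a proof.
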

\begin{proof}
Assume that $\{h_t\}$ does not foliate any component of $\h-h_0$. Let $\{x_t^-\}$ and $\{x_t^+\}$ be the set of endpoints of the family $\{h_t\}$, i.e, $\{x_t^-,x_t^+\}$ are the endpoints of $h_t$ where $x_t^-<x_t^+$. By defintion of a continuous family, the set $\{x_t^-\}$ is bounded from above by any element $x_t^+$ and $\{x_t^+\}$ is bounded from below by any $x_t^-$; set $\sup\{x_t^-\}=m$ and $\inf\{x_t^+\}:=M$. There are two cases depending on whether $m=M$ or not. If $m=M$, there exists a horocycle $h$ centered at $m$ and disjoint from $h_t$ for all $t$, since $\{h_t\}$ foliates any component of $\h-h_0$. We denote by $\mathcal{H}_m$ the set of all horocycles centered at $m$ and disjoint from $h_t$ for all $t$ and let $h'$ be the one with maximal radius. Since $h'$ has maximal radius, it follows that $\{h_t\}$ converges to $h'$.  The case $m\neq n$ follows the same idea by taking the set $\mathcal{H}_{hyper}$ of all hypercycles disjoint from the $h_t$'s with endpoints $m$ and $M$. 

Now, assume that $\{h_t\}$ is a continuous family of hypercycles and that $\{h_t\}$ foliates one of the components of $\h-h_0$. Then, for every hypercycle in the foliated component with the same endpoints as $h_0$ there exist $h_t$ such that $h$ is between $h_0$ and $h_t$. On the other hand, if $\{h_t\}$ is a continuous family admitting a hypercycle $h'$ as a limit, $h'$ is a hypercycle in the component $\h-h_0$ containing the $h_t$'s and $h'$ is disjoint from $h_t$ for all $t$. Finally, if $\{h_t\}$ converges to a horocycle $h'$ there exists a hypercycle with the same endpoints as $h_0$ and intersecting all $h_t$ ( take among the ones that intersects the horocycle $h'$) and there is no hypercycle disjoint from all $h_t$ in the connected component of $\h-h_0$ containing all $h_t$.\\
Therefore, the three types of convergence are characterized by different types of intersection patterns, so they are preserved by automorphisms.      
\end{proof}

\begin{proof}[\textbf{Proof of \Cref{ivan}:} Horocycles/Hypercycles cases]   
Let $\hat{f}$ be in $\mathrm{Aut}(\mathcal{K}_{horo})$; $h_0$ the horocycle centered at $0$ and passing through $i$ and $h_1$ the horizontal horocycle passing through $i$. Therefore, $\hat{f}(h_0)$ and $\hat{f}(h_1)$ are tangent, thanks to \Cref{gent}.  Since $\mathrm{Isom}(\h)$ is 3-transitive on $\mathbb{S}^1$, we can assume that $\hat{f}(h_0)$ and $\hat{f}(h_1)$ have the same center like $h_0$ and $h_1$, after postcomposing with an isometry. So, $\hat{f}(h_0)$ and $\hat{f}(h_1)$ are tangent at $ia$ and again by postcomposing with $z\mapsto \frac{z}{a}$ we have $\hat{f}(h_0)=h_0$ and $\hat{f}(h_1)=h_1$. From here, we can use the same argument like in the proof of \Cref{2} to show that every horocycle centered at $x$ and passing through $x+i$ is fixed by $\hat{f}$.\\
Now if $h$ is a given horocycle, there exists two horocycles $h'$ and $h''$ tangent to $h_0$ and such that $h'$ and $h''$ are tangent to $h$ (see \Cref{pince}). 
\begin{figure}[htbp]
\begin{center}
\includegraphics[scale=0.7]{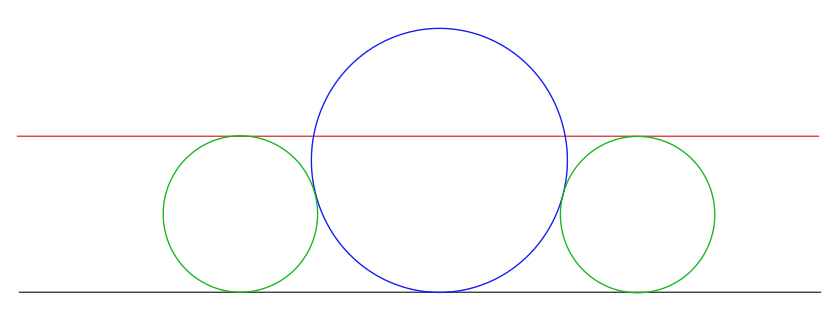}
\put(-140,105){\tiny{$h$}}
\put(-245,30){\tiny{$h'$}}
\put(-42,30){\tiny{$h''$}}
\put(-280,68){\tiny{$h_0$}}
\caption{}
\label{pince}
\end{center}
\end{figure}
Since $\hat{f}$ fixes $h'$ and $h''$, it follows that $\hat{f}$ fixes $h$. Thus, after postcomposing with an isometry $\hat{f}$ acts trivially on $\mathcal{K}_{horo}$ and this implies that $\hat{f}$ is induced by an isometry.   

Now, let $\hat{f}$ be an automorphism of $\mathcal{K}_{hyper}$. Our goal is to show that $\hat{f}$ induced an automorphism $\hat{f}^*$ of $\mathcal{K}_{horo}$. Let $h$ be an horocycle and $\{h_t\}$ be a continuous  family of hypercycles with limit $h$. By \Cref{lim} $\{\hat{f}(h_t)\}$ is a continuous family which converges to a horocycle $h'$; we defined $\hat{f}^*(h):=h'$. We claim that $\hat{f}^*$ is an automorphism. Consider $h_1$ and $h_2$ two disjoint horocycles with $\{h^1_t\}$ and  $\{h^2_t\}$ two continuous family of hypercycles converging to $h_1$ and $h_2$ respectively. Then, there exists $t_0$ such for $s,l\geq t_0$, $h^1_s\cap h^2_l=\emptyset$. And this implies that $\hat{f}^*(h_1)$ is disjoint from $\hat{f}^*(h_2)$, and thus $\hat{f}^*$ is an automorphism of $\mathcal{K}_{horo}$. Let $\phi:\mathrm{Aut}(\mathcal{K}_{hyper})\rightarrow \mathrm{Aut}(\mathcal{K}_{horo})$ defined by $\phi(\hat{f}):=\hat{f}^*$; $\phi$ is injective. Assume that $\hat{f}\neq \mathrm{Id}$ while $\hat{f}^*=\mathrm{Id}$. Then, there exists a hypercycle $h$ such that $\hat{f}(h)\neq h$; and a horocycle $h_0$ disjoint form $h$ such that $\hat{f}(h)\cap h_0\neq \emptyset$. Since $h$ is disjoint from $h_0$, by \Cref{disj} there exists a continuous family $\{h_t\}$ containing $h$ such that $h_0$ is its limit set. It follows that $\{\hat{f}(h_t)\}$ is a continuous family converging to $\hat{f}^*(h_0)=h_0$; which is absurd since $\{\hat{f}(h_t)\}$ contains $\hat{f}(h)$ which is not disjoint form $h_0$. Hence, $\phi$ is injective and this implies that every element of $\mathrm{Aut}(\mathcal{K}_{hyper})$ is induced by an isometry, as $\mathrm{Aut}(\mathcal{K}_{hyper})$ can be seen as a subgroup of $\mathrm{Aut}(\mathcal{K}_{horo})$. 
\end{proof}

\bibliography{references}{}
\bibliographystyle{alpha}\vspace{1cm}

\noindent\textit{\textbf{cheikh21.lo@ucad.edu.sn}}, University Cheikh Anta Diop, Dakar, Senegal\\
\textbf{\textit{abdoulkarim3.sane@ucad.edu.sn}}, University Cheikh Anta Diop, Dakar, Senegal 
\end{document}